\documentclass[10pt]{amsart}
\usepackage[margin=1.2in,marginparsep=0.1in,marginparwidth=1in]{geometry}
\usepackage{amssymb,amsmath,amsthm,amstext,amscd,latexsym,graphics,graphicx,bbm,caption}
\usepackage[usenames,dvipsnames,svgnames,table]{xcolor}
\usepackage[plainpages=false,colorlinks=true, pagebackref]{hyperref}
\usepackage{tikz}
\usepackage{tikz-cd}
\usetikzlibrary{positioning}

\tikzset{>=stealth}
\hypersetup{citecolor=Sepia,linkcolor=blue, urlcolor=blue}

\usepackage{cite}   

\makeatletter
\def\@tocline#1#2#3#4#5#6#7{\relax
  \ifnum #1>\c@tocdepth 
  \else
    \par \addpenalty\@secpenalty\addvspace{#2}%
    \begingroup \hyphenpenalty\@M
    \@ifempty{#4}{%
      \@tempdima\csname r@tocindent\number#1\endcsname\relax
    }{%
      \@tempdima#4\relax
    }%
    \parindent\z@ \leftskip#3\relax \advance\leftskip\@tempdima\relax
    \rightskip\@pnumwidth plus4em \parfillskip-\@pnumwidth
    #5\leavevmode\hskip-\@tempdima
      \ifcase #1
       \or\or \hskip 2em \or \hskip 2em \else \hskip 3em \fi%
      #6\nobreak\relax
    \dotfill\hbox to\@pnumwidth{\@tocpagenum{#7}}\par
    \nobreak
    \endgroup
  \fi}
\makeatother



\newtheorem{intro-thm}{Theorem}[]
\theoremstyle{plain}
\newtheorem{thm}{Theorem}[section]
\newtheorem{theorem}[thm]{Theorem}

\newtheorem{lemma}[thm]{Lemma}

\newtheorem{proposition}[thm]{Proposition}

\theoremstyle{definition}
\newtheorem{remark}[thm]{Remark}







\newcommand{\Spec}{{\rm Spec \,}}

\renewcommand{\tilde}{\widetilde}

\newcommand{\sD}{{\mathcal D}}
\newcommand{\sE}{{\mathcal E}}

\newcommand{\sH}{{\mathcal H}}

\newcommand{\sO}{{\mathcal O}}

\newcommand{\sS}{{\mathcal S}}

\newcommand{\A}{{\mathbb A}}

\renewcommand{\P}{{\mathbb P}}

\newcommand{\etale}{\'{e}tale}

\newcommand{\ra}{\rightarrow}

\input{xy}
\xyoption{all}

\begin{document}

\title{G\MakeLowercase{abber's presentation lemma over noetherian domains}}
\author{{N\MakeLowercase{eeraj} D\MakeLowercase{eshmukh,}}\ {A\MakeLowercase{mit} H\MakeLowercase{ogadi,}} 
{G\MakeLowercase{irish} K\MakeLowercase{ulkarni}}\ \MakeLowercase{and}\ {S\MakeLowercase{uraj} Y\MakeLowercase{adav}}}

\subjclass[2000]{14F20, 14F42}


\date{}

\begin{abstract} Following Schmidt and Strunk, we give a proof of Gabber's presentation lemma over a noetherian domain with infinite residue fields.
\end{abstract}

\maketitle

\section{Introduction}
Gabber's presentation lemma, initially proved by O. Gabber for the base, spectrum of an infinite field in \cite{gabber} (see also \cite{colliot}, \cite{hogadi2018}) plays a fundamental role in the study of $\A^1$- homotopy theory, especially as developed by Morel in \cite{morel2012}. 
This lemma may be thought of as an algebro-geometric analogue of the tubular neighbourhood theorem in differential geometry. In \cite{schmidt2018}, this lemma was generalized by J. Schmidt and F. Strunk to the case where the base is a spectrum of a Dedekind domain with infinite residue fields. The goal of this paper is to show that the arguments given in \cite{schmidt2018} can, in fact, be modified to obtain a proof of Gabber's presentation lemma over a general noetherian domain with all its residue fields infinite. The following is the main result of this paper.

\begin{thm}\label{main}
	Let $S=\Spec (R)$ be the spectrum of a noetherian domain with all its residue fields infinite. Let $X$ be a smooth, irreducible, equi-dimensional $S$-scheme of relative dimension $d$. Let $Z\subset X$ be a closed subscheme, $z$ be a closed point in $Z$ lying over $s\in S$, such that that $dim (Z_s) < dim (X_s)$. Then after possibly replacing $S$ by a Nisnevich neighbourhood of $s$ and $X$ by a Nisnevich neighbourhood of $z$, there exists a map $\Phi=(\Psi,\nu):X\rightarrow \A_{S}^{d-1}\times \A_{S}^1$, an open subset $V\subset\A_{S}^{d-1}$ and an open subset $U\subset \Psi^{-1}(V)$ containing $z$ such that
	\begin{enumerate}
		\item $Z\cap U = Z\cap \Psi^{-1}(V)$		
		\item $\Psi_{|Z}: Z\rightarrow \A_{S}^{d-1}$ is finite
		\item $\Phi_{|U}: U\rightarrow \A_{S}^d$ is \etale{} 
		\item $\Phi_{|Z\cap U}:Z\cap U\rightarrow \A^1_V$ is a closed immersion 
		\item $\Phi^{-1}(\Phi(Z\cap U))\cap U=Z\cap U$.
	\end{enumerate}
\end{thm}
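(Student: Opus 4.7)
The plan is to adapt the Schmidt-Strunk strategy. After replacing $X$ by a Zariski neighbourhood of $z$, we may assume $X$ is affine and admits a closed embedding $X \hookrightarrow \A^N_S$ for some $N$, with projective closure $\bar X \subset \P^N_S$ and boundary $\partial X = \bar X \setminus X$; let $\bar Z$ be the closure of $Z$ in $\bar X$. The goal then becomes to find a codimension-$d$ linear subspace $L \subset \P^N_S$ whose associated projection $\P^N_S \dashrightarrow \P^{d-1}_S$ restricts, after choosing an affine chart missing $L$, to a map $\Psi:X \to \A^{d-1}_S$ with $\Psi|_Z$ finite, together with an auxiliary function $\nu$ arranging that $\Phi = (\Psi,\nu)$ is \etale{} at $z$.

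The technical core is to solve the problem first on the closed fibre $\bar X_s \subset \P^N_{\kappa(s)}$. Since $\kappa(s)$ is infinite, a Bertini / generic-projection argument yields $L_s \subset \P^N_{\kappa(s)}$ of codimension $d$ in sufficiently general position: disjoint from $\bar Z_s \cup \{z\}$, meeting $\partial X_s$ properly, inducing a finite projection on $\bar Z_s$, and, together with a generically chosen linear form $\nu_s$, \etale{} at $z$. The locus of such $L_s$ is a non-empty Zariski-open subset of the Grassmannian $\mathrm{Gr}(N+1-d,N+1)_{\kappa(s)}$, hence admits a $\kappa(s)$-rational point. Using the smoothness of the Grassmannian, this point lifts to a Nisnevich neighbourhood of $s$, and openness of the ``good'' loci propagates the required properties from the fibre to that neighbourhood. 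This produces $\Psi$ and $\nu$ satisfying (2) and (3) after the allowed Nisnevich shrinking of $S$ and $X$.

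Once (2) and (3) are in place, conditions (1), (4) and (5) follow by further shrinking. We choose an open $V \subset \A^{d-1}_S$ small enough that $\Psi(Z) \cap V \subset \Psi(Z \cap U)$ for a suitable open $U \ni z$ with $U \subset \Psi^{-1}(V)$; this is possible because $\Psi|_Z$ is finite, so $\Psi(Z)$ is closed and $\Psi^{-1}(\Psi(z)) \cap Z$ is a finite set of points, only one of which is $z$. Condition (4) then uses that $\Phi|_{Z\cap U}$ is finite and unramified (being a restriction of the \etale{} map $\Phi|_U$) to realise the closed immersion, possibly after shrinking $V$ so that the residue-field extension at $z$ becomes trivial; (5) is obtained by shrinking $U$ further so that the only component of $\Phi^{-1}(\Phi(Z\cap U)) \cap U$ meeting $z$ is $Z \cap U$ itself.

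The main obstacle is the Bertini-lifting step over a general noetherian base. In the Dedekind case, Schmidt-Strunk have only to reconcile the generic and closed points, so the lift of $L_s$ to a global linear subspace is essentially automatic. In the general setting one must verify that the ``good'' open subset of the Grassmannian is open and Nisnevich-locally non-empty over arbitrary noetherian $S$, which requires propagating genericity conditions through all relevant fibres and carefully exploiting the equi-dimensionality of $X \to S$. The hypothesis that every residue field of $R$ is infinite is precisely what makes the fibrewise genericity conditions simultaneously achievable by $\kappa$-rational points, so that the lifting and shrinking arguments go through.
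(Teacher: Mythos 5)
Your outline follows the same broad strategy as the paper (embed in $\A^N_S$, take projective closure, project from a generic linear centre, then shrink to get the remaining conditions), so the architecture is right. But there is a concrete gap in the ``Bertini-lifting'' step, and it is precisely the point the paper spends its entire Section~2 addressing.

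When you try to choose a codimension-$d$ linear centre $L_s \subset \P^N_{\kappa(s)}$ disjoint from $\overline{Z}_s$ and contained in $H_\infty$, what matters is the dimension of $(\overline{Z})_s \cap H_\infty$, where $\overline{Z}$ is the projective closure of $Z$ in $\P^N_S$. The problem is that taking projective closure does \emph{not} commute with passing to the fibre: $(\overline{Z})_s$ can strictly contain the closure $\overline{Z_s}$ of the closed fibre and may acquire extra components of dimension $\geq d$ lying entirely in $H_\infty$. In that case no codimension-$d$ linear subspace of $H_\infty$ can miss $(\overline{Z})_s$, and the projection $\Psi|_Z$ cannot be made finite — the ``good'' open subset of your Grassmannian is simply empty. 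Equi-dimensionality of $X \to S$ does not rule this out, since it is a statement about $Z$ (not its closure) and about the actual fibres (not fibres of the closure). Schmidt--Strunk avoid this via Kai's \cite[Theorem~4.1]{kai2015}, which over a Dedekind base lets one choose the embedding so that $Z$ is fibre-wise dense in $\overline{Z}$; that is the real reason the Dedekind case is easier, not that ``the lift of $L_s$ is essentially automatic.''

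The paper's new contribution is exactly to supply a substitute for Kai's result over a general noetherian base: Theorem~\ref{ydense} shows that, after Nisnevich localisation, one can choose the affine embedding so that $Z_\sigma$ is dense in the union of the $n$-dimensional components of $(\overline{Z})_\sigma$. This weaker denseness is then fed into Proposition~\ref{misses}, which deduces $\dim\bigl((\overline{Z})_\sigma \cap H_\infty\bigr) = n-1$ and hence, via repeated hyperplane cuts (Lemma~\ref{dimdrop}), the existence of the required linear forms $v_1,\ldots,v_d$. Your proposal's ``propagating genericity conditions through all relevant fibres'' does not engage with this; without a fibre-wise denseness statement the argument does not close. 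The remainder of your outline (shrinking to get $U$, $V$, closed immersion, and the pullback condition) matches the paper's Lemmas~\ref{setv} and~\ref{setu} and is fine, but those steps only work once the finiteness of $\Psi|_Z$ is secured, which is where the gap lies.
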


     In \cite{schmidt2018} J. Schmidt and F. Strunk, use the presentation lemma to generalize the $\A^1$-connectivity result of F. Morel ( \cite[Theorem 6.1.8]{morel2005}) over Dedekind schemes with infinite residue fields. As an application of Theorem \ref{main}, we observe that the connectivity result holds over any noetherian domain with all its residue fields infinite. To state this result we recall the following standard notation: For a base scheme $S$, let $\sS\sH^s_{S^1}(S)$
     be the model category of sheaves of $S^1$-spectra over $S$. For an integer $i$, let $\sS\sH^{s}_{S^1 \geq i}(S)$ be 
     the full subcategory of $i$-connected spectra. Let 
     $ \sS\sH^s_{S^1}(S) \xrightarrow{L^{\A^1}} \sS\sH^s_{S^1}(S)$
     be the $\A^1$-fibrant replacement functor. Then
 

\begin{theorem}\label{connectivity}
	Let $S=\Spec (R)$ be the spectrum of a noetherian domain of dimension $d$ with all its residue fields infinite. Then S has the shifted stable $\A^1$-connectivity property, that is, if $E \in \sS\sH^{s}_{S^1 \geq i}(S)$ then $L^{\A^1} E \in \sS\sH^{s}_{S^1 \geq i-d}(S).$
\end{theorem}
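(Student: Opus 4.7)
The plan is to follow the argument of Schmidt--Strunk \cite{schmidt2018} essentially verbatim, using Theorem \ref{main} in place of their Dedekind-case presentation lemma as the sole geometric input. The formal machinery of stable $\A^1$-homotopy theory over a base developed by Morel is base-agnostic, so only the geometric reductions require revisiting when $\dim S = d$ is arbitrary.

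First I would recall the general reduction: by standard stable model-structure arguments, showing that $L^{\A^1}$ shifts connectivity by at most $d$ reduces to showing that for every Nisnevich sheaf of abelian groups $M$ arising as a homotopy sheaf $\pi_j(L^{\A^1}E)$, and every Henselian local essentially smooth $S$-scheme $(U,u)$, the Nisnevich cohomology $H^n_{Nis}(U, M)$ vanishes for $n > d$. This cohomological-dimension bound on Henselian local essentially smooth $S$-schemes is exactly where the hypothesis $\dim S = d$ enters; it is valid for any noetherian base of dimension $d$ and does not require regularity.

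Second, one must verify that the Nisnevich homotopy sheaves of $L^{\A^1} E$ are themselves strictly $\A^1$-invariant. Following Schmidt--Strunk, this is proved by induction on the codimension of support, and the inductive step requires the following geometric statement: given a closed pair $(X, Z)$ of smooth $S$-schemes with $\dim Z_s < \dim X_s$ at a closed point $z \in Z$, there is, Nisnevich-locally around $z$, a Nisnevich distinguished square realizing $(X,Z)$ as the pullback of a pair $(\A^1_V, Z_0)$, where $V$ is an open in $\A^{d-1}_S$, $Z_0 \subset \A^1_V$ is closed, and $Z_0 \to V$ is finite. But this is precisely the content of conditions (1)--(5) of Theorem \ref{main}. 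The resulting Mayer--Vietoris/excision square reduces the analysis on $X$ to the corresponding analysis on $\A^1_V$, after which the strict $\A^1$-invariance of $M$ combined with the inductive hypothesis on $V \subset \A^{d-1}_S$ yields the required vanishing.

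The main obstacle is precisely the production of these Nisnevich distinguished squares in the absence of any regularity assumption on $S$: in the Dedekind case this is Schmidt--Strunk's presentation lemma, and in our setting it is Theorem \ref{main}. Once Theorem \ref{main} is in hand, every remaining ingredient of the Schmidt--Strunk argument---the cohomological dimension bounds for Henselian local rings of essentially smooth $S$-schemes, the assembly via the Postnikov tower of $L^{\A^1}E$, and the formal properties of the functor $L^{\A^1}$---is valid over an arbitrary noetherian base and transfers without modification. The conclusion $L^{\A^1}E \in \sS\sH^s_{S^1 \geq i-d}(S)$ then follows by direct substitution of Theorem \ref{main} into the proof of \cite[Theorem 1.2]{schmidt2018}.
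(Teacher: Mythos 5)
Your overall strategy is correct and matches the paper's: substitute Theorem \ref{main} for Schmidt--Strunk's Dedekind-case presentation lemma and let the remainder of their argument carry through, since nothing else in it depends on the Dedekind hypothesis. However, your intermediate description of the Schmidt--Strunk mechanism is not what the paper actually records. The paper proceeds by induction on $\dim S$ (base case $\dim S = 0$ being Morel's connectivity theorem over a field), not by an ``induction on the codimension of support'' of homotopy sheaves; after reducing to $S$ Henselian local (Remark \ref{Hensel}), one first produces, for a given class $f \in [\Sigma^\infty_{S^1} X_+, L^{\A^1} E]$, an open $U \subset X$ meeting every irreducible component of the special fiber with $f|_{\Sigma^\infty_{S^1} U_+} = 0$ (Lemma \ref{zero}); the reduced complement $Z = X \setminus U$ then satisfies $\dim Z_\sigma < \dim X_\sigma$, so Theorem \ref{main} furnishes the Nisnevich distinguished square of Lemma \ref{dist}, giving $\pi^{\A^1}_0(X/U) = 0$; and Schmidt--Strunk's Proposition 4.5 assembles these two vanishings into the connectivity bound. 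Your framing via strict $\A^1$-invariance of homotopy sheaves and a Nisnevich cohomological-dimension reduction is closer in flavour to Morel's unstable arguments than to this stable two-step scheme; the cohomological-dimension bound for noetherian $d$-dimensional bases is indeed in the background, but it is not the reduction the cited proof makes. Since you defer the details to \cite{schmidt2018} in any case, this imprecision does not damage your conclusion, but the chain of lemmas that is actually invoked is the one above.
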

The proof of Theorem \ref{connectivity} is exactly the same as the proof of its analogue in \cite{schmidt2018}  except for the input from Gabber's presentation lemma, the required generality of which is available once Theorem \ref{main} is proved. We present a sketch of the proof of Theorem \ref{connectivity} in Section 4.\\

An important ingredient of the proof of the Gabber's presentation lemma of \cite{schmidt2018} is \cite[Theorem 4.1]{kai2015}, which states that given an equi-dimensional scheme $Y$ over a Dedekind scheme $B$ with infinite residue fields, Nisnevich locally on $B$ there exists a projective closure $\overline{Y}$ of $Y$ in which $Y$ is fiber-wise dense. Unfortunately, we are unable to prove such a result over a general base. However, we observe that a slightly weaker result (see Theorem \ref{ydense}) can be proved which suffices for our purpose. As in Gabber's original proof of the presentation lemma, as well as in \cite{schmidt2018}, the condition of residue fields being infinite in Theorem \ref{main} is required in order to make suitable generic choices. We are currently working on removing the condition of residue fields being infinite taking inputs from \cite{hogadi2018}.

\noindent{\bf Acknowledgments.} The first-named author was supported by the INSPIRE fellowship of the Department of Science and Technology, Govt.\ of India during the course of this work. The last-named author was supported by NBHM fellowship of Department of Atomic Energy, Govt.\ of India during the course of this work.  We thank F. Strunk and  J. Koll\'ar for their comments on the draft of this paper. We thank the anonymous referees for their helpful comments and suggestions.

\section{Fiber-wise denseness}\label{section_dense}
In this section, we prove a technical result which is crucial to the proof of our main theorem. It is essentially \cite[Theorem 4.1]{kai2015} with minor modifications (see also \cite[Theorem 10.2.2]{levine2006} ). Throughout this section, $\dim_B(Y)$ denotes the supremum of dimensions of all the fibers of $Y\rightarrow B$.

\begin{theorem}\label{ydense}
Let $B$ be the spectrum of a noetherian domain.	Let $Y/B$ be either a smooth scheme or a divisor in a smooth scheme $X$. Let $y\in Y$ be a point lying over a point $b\in B$ with $\dim_B(Y_b) = n$. Assume $k(b)$ is an infinite field. Then there exist Nisnevich neighborhoods $(Y',y)\ra(Y,y)$ and $(B',b)\ra(B,b)$, fitting into the following commutative diagram 
		\begin{center}
		\begin{tikzcd}
		& Y' \arrow[d] \arrow[r]&Y\arrow{d}\\
		& B'\arrow[r]&{B}
		\end{tikzcd}
	\end{center}
	and a closed immersion $Y'\ra \A_{B'}^N$ for some $N \geq 0$ such that if $\overline{Y'}$ is its closure in $\P_{B'}^N$ then $Y'_y$ is dense in the union of $n$-dimensional irreducible components of $(\overline{Y'})_y$. 
\end{theorem}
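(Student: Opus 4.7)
My plan is to reduce to the case where $Y$ is affine and integral, choose a closed immersion into an affine space over $B$, and then modify this immersion generically (using the infinite residue field hypothesis) to kill the ``unwanted'' components of the projective closure's fiber over $b$. This is close in spirit to \cite[Theorem 4.1]{kai2015}, except that we only need to control the fiber over one point $b$ rather than all fibers of $B$.

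First, I would shrink $Y$ to a Zariski open neighbourhood of $y$ (which is automatically a Nisnevich neighbourhood). A further shrinking to discard the irreducible components of $Y$ not passing through $y$ reduces us to $Y$ affine and integral; this is harmless since the local ring at $y$ is a domain (by smoothness in the smooth case, or because $Y$ is locally principal in a smooth affine scheme in the divisor case). Pick a closed immersion $\iota : Y \hookrightarrow \A^N_B$ and let $\overline{Y} \subset \P^N_B$ denote its scheme-theoretic closure. The fiber $\overline{Y}_b$ is a closed subscheme of $\P^N_{k(b)}$ containing $Y_b = \overline{Y}_b \cap \A^N_{k(b)}$ as an open subset, and its $n$-dimensional irreducible components split into ``good'' ones (not contained in the hyperplane at infinity $H_{\infty,b}$, equivalently meeting $Y_b$ in a dense open) and ``bad'' ones (contained entirely in $H_{\infty,b}$). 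The task is to modify $\iota$ so that only good components persist.

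The key step is to compose $\iota$ with a generically chosen polynomial. I would pick $f \in R[1/s][y_1,\dots,y_N]$ for some $s \notin \mathfrak{m}_b$, of degree $d$, whose leading homogeneous part $f_d \bmod \mathfrak{m}_b$ does not vanish on any $n$-dimensional component of $\overline{Y}_b \cap H_{\infty,b}$; such an $f$ exists because the space of degree-$d$ forms vanishing on a fixed positive-dimensional subvariety of $\P^{N-1}_{k(b)}$ is a proper linear subspace of the space of all degree-$d$ forms, and the infinite residue field supplies the required $k(b)$-point avoiding finitely many such subspaces. Setting $B' = \Spec R[1/s]$, a Zariski (hence Nisnevich) neighbourhood of $b$, consider the new closed immersion $\iota' : Y_{B'} \hookrightarrow \A^{N+1}_{B'}$ given by $y \mapsto (\iota(y), f(y))$. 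The induced rational map $\P^N \dashrightarrow \P^{N+1}$ has the form $[X_0 : \dots : X_N] \mapsto [X_0^d : X_0^{d-1} X_1 : \dots : X_0^{d-1} X_N : F(X_0, \dots, X_N)]$ (with $F$ the homogenisation of $f$); on the fiber over $b$ it is birational on good components of $\overline{Y}_b$ and sends the generic point of each bad $n$-dimensional component to the single point $[0:\cdots:0:1]$, since $f_d$ does not vanish there. Therefore the $n$-dimensional components of the new closure $\overline{Y'}_b$ are precisely the images of the good components of $\overline{Y}_b$, and $Y'_y \cong Y_b$ is dense in their union, as desired.

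The main obstacle is checking that no spurious $n$-dimensional components appear in $\overline{Y'}_b$ from non-flat behaviour of $\overline{Y'} \to B'$ at $b$ or from the indeterminacy locus of the rational map above; controlling this uses the genericity of $f_d$ together with irreducibility of $Y$, and possibly one more Zariski shrinking of $Y_{B'}$ around $y$ to cut off lower-dimensional extraneous components. The divisor case proceeds along the same lines, using that Cartier divisors in smooth schemes are Cohen--Macaulay, so that the fibers under consideration are equidimensional at the outset.
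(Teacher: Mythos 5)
Your central claim---that the $n$-dimensional components of $(\overline{Y'})_b$ are precisely the images of the good components of $\overline{Y}_b$---is exactly the hard content of the theorem, and the argument you give does not establish it. The fiber $(\overline{Y'})_b$ of the projective closure over $B'$ is in general strictly larger than the closure of $Y'_b$ inside $\P^{N+1}_b$. Since $B'$ is not Dedekind, $\overline{Y'}\to B'$ need not be flat (even though $Y'\to B'$ is), and the fiber over $b$ can acquire extra, possibly $n$-dimensional, components contained in the hyperplane at infinity that arise from degeneration over $B'$ and are therefore invisible on $\overline{Y}_b$. Your choice of $f_d$ lives entirely on that one fiber $\overline{Y}_b$ and controls only how the rational map $\rho_b$ behaves there; it says nothing about how the scheme-theoretic closure of $Y'_{B'}$ degenerates at $b$. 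This is precisely the obstruction the paper records in Remark~\ref{Kaiapp}: Kai's proof invokes the Dedekind hypothesis to guarantee the closure stays equidimensional, and that is the assumption you would need but cannot have here. Your last paragraph acknowledges this as ``the main obstacle,'' but invoking ``genericity of $f_d$ and irreducibility of $Y$'' and an extra Zariski shrinking is not an argument; the shrinking of $Y$ in particular changes the closure in ways that are themselves uncontrolled, and lower-dimensional shaving does not address the $n$-dimensional jumping components.

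The paper's proof is structured specifically to sidestep this. Rather than choosing a clever embedding of $Y$ and hoping its closure has controlled fibers, it builds (Lemma~\ref{1-d}) a projective morphism $\psi : \tilde{X} \to \P^{n-1}_B$ with $1$-dimensional fibers, then a map $f: \tilde{X}\to\P^1_T$ that is finite away from a $T$-finite locus $W$, and then applies a Stein factorization to the compactified map $\overline{X_1}\to\P^1_{\overline{T_1}}$; the finite factor $\overline{X_2}\to\P^1_{\overline{T_1}}$ automatically has fiber dimension bounded by that of $\P^1_{\overline{T_1}}$, which in turn is controlled by the inductive hypothesis applied to $T$ (of relative dimension $n-1$). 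It is finiteness, together with induction on the relative dimension, that tames the fibers of the closure over a general noetherian base---not a one-shot generic projective embedding. To close the gap in your proposal you would need a structural ingredient of this kind, and without it the argument does not go through.
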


\begin{remark}\label{Kaiapp}
	The above theorem is a weaker statement than \cite[Theorem 4.1]{kai2015} (see also \cite[Theorem 10.2.2]{levine2006}) but over a general base. In the proof of \cite[Theorem 4.1]{kai2015} the author mentions that the base is assumed to be Dedekind to ensure that the projective closure of an equi-dimensional scheme remains equi-dimensional over $B$.

\end{remark}
We begin with an intermediary lemma which will be used repeatedly (see also \cite[Lemma 10.1.4]{levine2006}).

\begin{lemma}\label{1-d}
Let $X$ be an affine scheme. Choose a closed embedding $X\rightarrow\A^N_B$ and a point $x\in X$. Let $\overline{X}$ be the projective closure of $X$ in $\P^N_B$, and assume that it has fiber dimension $n$. Then, there exists 
\begin{enumerate}
\item a projective scheme $\tilde{X}$, 
\item an open neighbourhood $X_0$ of $x$ (in $X$), 
\item an open immersion $X_0\hookrightarrow \tilde{X}$ and 
\item a projective morphism $\psi: \tilde{X}\rightarrow \P^{n-1}_B$
\end{enumerate}
such that $\psi$ has fiber dimension one.
\end{lemma}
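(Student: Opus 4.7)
The strategy is a generic linear projection from the hyperplane at infinity. Let $H \subset \P^N_B$ denote the hyperplane at infinity, so that $\A^N_B = \P^N_B \setminus H$ and in particular $X \cap H = \emptyset$. We would first choose a linear subspace $L \subset H$ of dimension $N-n$, generic in the $B$-grassmannian of $(N-n)$-planes in $H \cong \P^{N-1}_B$, and let $\pi_L \colon \bar{X} \dashrightarrow \P^{n-1}_B$ be the linear projection from $L$. Genericity forces the generic fibers of $\pi_L$ to have the expected dimension $n - (n-1) = 1$; the relevant open condition on the grassmannian is non-empty, and since $k(b)$ is infinite (a hypothesis inherited from the setup of Theorem \ref{ydense}) we can realize it over $k(b)$ and then lift the coefficients cutting out $L$ to $B$.

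Next, define $\tilde{X}$ to be the scheme-theoretic closure of the graph of $\pi_L$ inside $\bar{X} \times_B \P^{n-1}_B$; equivalently, $\tilde{X}$ is the strict transform of $\bar{X}$ in the blow-up $\mathrm{Bl}_L \P^N_B$, and the latter is naturally a $\P^{N-n+1}$-bundle over $\P^{n-1}_B$. As a closed subscheme of the projective $B$-scheme $\bar{X} \times_B \P^{n-1}_B$, the scheme $\tilde{X}$ is itself projective, and the second projection $\psi \colon \tilde{X} \to \P^{n-1}_B$ is projective by construction. Because $L \subset H$ while $X \cap H = \emptyset$, the indeterminacy locus $\bar{X} \cap L$ of $\pi_L$ is disjoint from $X$, so the first projection $\tilde{X} \to \bar{X}$ restricts to an isomorphism over $X$. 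Taking $X_0$ to be any open neighborhood of $x$ inside $X$ yields the open immersion $X_0 \hookrightarrow \tilde{X}$ and settles items (1)--(3).

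The crucial remaining item is (4), that $\psi$ has fiber dimension one. The generic fiber has dimension one by the dimension count and the genericity of $L$, and upper semi-continuity of fiber dimension confines any violations to a proper closed subset of $\P^{n-1}_B$. The main obstacle, flagged in Remark \ref{Kaiapp}, is that over a general noetherian base the projective closure $\bar{X}$ need not be fiber-wise equi-dimensional over $B$, so some fiber of $\psi$ could \emph{a priori} acquire excess dimension from an anomalous component of $\bar{X}$ sitting in a special $B$-fiber. The resolution is that the genericity of $L$ only needs to bite over the fiber $b$, and the lemma allows us to shrink $X$ to a smaller neighborhood $X_0$ of $x$; replacing $\tilde{X}$ by the scheme-theoretic closure of $X_0$ inside the original $\tilde{X}$ keeps $\tilde{X}$ projective as a closed subscheme and arranges that the restricted $\psi$ has fiber dimension exactly one.
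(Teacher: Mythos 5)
Your overall strategy (blow up $\overline{X}$ along a suitably generic codimension-$n$ linear subspace and project to $\P^{n-1}_B$) coincides with the paper's; the only cosmetic deviation is that you insist the center $L$ sits inside the hyperplane at infinity, whereas the paper simply asks that $V(\Psi)$ avoid $x$. That is fine. The gap is in your treatment of item (4). You establish the dimension bound only for the generic fiber, invoke upper semicontinuity to confine violations to a closed subset, and then attempt to dispose of that closed subset by shrinking $X_0$ and retaking its closure inside $\tilde{X}$. This last step accomplishes nothing: in the applications $X$ is irreducible, so any open neighborhood $X_0$ of $x$ in $X$ is dense in $X$, hence dense in $\overline{X}$ and in its strict transform $\tilde{X}$; its scheme-theoretic closure is all of $\tilde{X}$ no matter how small you take $X_0$. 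Nor do you address the need to shrink $B$.

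The paper's argument for the uniform (all-fiber, not just generic) bound is quite different and is the heart of the lemma. One chooses the linear center $V(\Psi)$ so that $\overline{X}\cap V(\Psi)$ is quasi-finite over the distinguished point $b$; by properness and openness of the quasi-finite locus one may shrink $B$ so that $\overline{X}\cap V(\Psi)$ is finite over all of $B$. Then for \emph{any} point $y\in\P^{n-1}_B$ lying over $a\in B$, the fiber $\psi^{-1}(y)\cap\tilde{X}$ maps bijectively under the blowdown $p$ to $V(y)\cap\overline{X}_a$, where $V(y)\cong\P^{N-n+1}_a$ is the linear fiber of the projection and contains $V(\Psi)_a\cong\P^{N-n}_a$ as a hyperplane. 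Finiteness of $V(y)\cap\overline{X}_a\cap V(\Psi)_a$ forces $\dim\bigl(V(y)\cap\overline{X}_a\bigr)\le 1$, for every $y$. Your write-up should replace the shrinking maneuver with this linear-slicing argument (and record the Zariski shrinkage of $B$); as it stands, the proof does not rule out a two-dimensional fiber of $\psi$ over a non-generic point of $\P^{n-1}_B$.
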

\begin{proof}
We follow the arguments given in \cite[Theorem 4.1]{kai2015} verbatim (see also \cite[Theorem 10.1.4]{levine2006}).
After possibly shrinking $B$, we can find $n$ hyperplanes $\Psi=\lbrace\psi_1,\ldots,\psi_n\rbrace$ which are part of a basis of $\Gamma(\P^N_B,\sO(1))$ as a $B$-module. The choice is such that $V(\Psi)$, which denotes the common zeros of all $\psi_i$, does not contain $x$ and it meets $X$ fiber-wise properly over $B$, so that $\overline{X}\cap V(\Psi)$ is finite over $B$. Let $p:\tilde{\P^N}\rightarrow \P^N$ be the blowup of $\P^N$ along $V(\Psi)$, and $\tilde{X}$ the strict transform of $\overline{X}$ in the blowup. Let $\psi: \tilde{\P^N_B}\rightarrow \P^{n-1}_B$ denote the map induced by the rational map defined by a projection from $V(\Psi)$. Let $X_0:=\overline{X}\setminus V(\Psi)$. We have the following commutative diagram:
\begin{center}
		\begin{tikzcd}[row sep=tiny]
		& \tilde{X} \arrow[dd] \arrow[r,hook,"cl."]&\tilde{\P^N_B}\arrow{dd}\arrow[r,"\psi"]&\P^{n-1}_B\\
		X_0 \arrow[ur,hook] \arrow[dr,hook] & \\
		& \overline{X}\arrow[r,hook,"cl."]&{\P^N_B}
		\end{tikzcd}
\end{center}
We claim that $\psi:\tilde{X}\rightarrow\P^{n-1}_B$ has fiber dimension one.
To see this, choose any point $y\in\P^{n-1}_B$, and consider the composite $a:\Spec(\Omega)\overset{y}{\rightarrow}\P^{n-1}_B\rightarrow B$. Then, the fiber of $\psi$ over $y$ may be identified with a linear subscheme $V(y)$ of $\P^N_a$, of dimension $N-n+1$. Furthermore, $V(y)$ contains the base change $V(\Psi)_a$, which has dimension $N-n$, by construction. Again by construction, the intersection $V(y)\cap \overline{X}\cap V(\Psi)_a$ is finite in $\P^N_a$. This means that $V(y)\cap \overline{X}$ has dimension $1$ in the projective space $V(y)$.

Further note that for $x\in V(\Psi)$, $p^{-1}(x)\simeq\P^{n-1}$. Also, the exceptional divisor of $\tilde{X}$ is an irreducible subscheme. Therefore, for any point $x\in V(\Psi)\cap X$, the fiber $\tilde{X}_x$ is an irreducible subscheme of $\P^{n-1}$ of dimension $n-1$. Therefore, $p^{-1}(\overline{X})=\tilde{X}$, so that $p:\psi^{-1}(y)\cap \tilde{X}\rightarrow V(y)\cap \overline{X}$ is a bijection. Thus, $\psi: \tilde{X}\rightarrow \P^{n-1}_B$ has 1-dimensional fibers.
\end{proof}

	

\begin{proof}[Proof of \ref{ydense}] We first prove the result in the case when $Y=X$ is a smooth scheme. The proof is by induction on $n$. The case $n=0$ follows from a version of Hensel's lemma.\\
	\noindent \underline{Step 1}: As $X$ is smooth, Zariski locally on $B$, we write $X$ as a hypersurface in some $\A^N_B$. Let $\overline{X}$ denote its reduced closure in $\P^N_B$. Note that $\overline{X}$ also has fiber-dimension $n$ over $B$. By applying Lemma \ref{1-d}, we get a projective morphism $\psi: \tilde{X}\rightarrow \P^{n-1}_B$ with 1-dimensional fibers.


\noindent\underline{Step 2}: Set $T= \P^{n-1}_B$ and $t=\psi(x)$. Choose any projective embedding $\tilde{X} \hookrightarrow \P^{N_2}_T$. Let $(\tilde{X})_t$ and $(X_0)_t$ denote the fibers over $t$ of $\tilde{X}$ and $X_0$ respectively. Then choose a hypersurface $H_t \subset \P^{N_2}_t$ satisfying the next three conditions. 
\begin{enumerate}
	\item $x\in H_t$ (if $x$ is a closed point in $(X_0)_t$) 
	\item $(\tilde{X})_t$ and $H_t$ meet properly in $\P_t^{N_2}$.
	\item $H_t$ does not meet $\overline{({X_0})_t}\setminus{({X_0})_t}$.
\end{enumerate}

Now after restricting to a suitable Nisnevich neighbourhood of $T$, which we denote again by $T$ (and after base changing everything to $T$), using the hyperplane $H_t$, we can choose a Cartier divisor $\sD$ which fits into the following diagram 
\begin{center}
	\begin{tikzcd}[row sep=tiny,column sep=large]
	& \tilde{X}  \arrow{r}{projective}[swap]{1-dim}&T\arrow[r,"Nis"]&\P^{n-1}_B\\
	X_0 \arrow[ur,hook] & \\
	& \sD\arrow[uu]\arrow[ul,hook,"{Cartier. div}"]\arrow{uur} [swap]{finite}
	\end{tikzcd}
\end{center}
For sufficiently large $m$ we can find a section $s_0$ of $\Gamma(\tilde{X},\sO_{\tilde{X}}(m\sD))$ which maps to a nowhere vanishing section of $\Gamma(\sD,\sO_{\sD})$. Let $s_1: \sO_{\tilde{X}} \ra \sO_{\tilde{X}}(m\sD)$ be the canonical inclusion. Since the zero-loci of $s_0$ and $s_1$ are disjoint, we get a map
$$f=(s_0,s_1):\tilde{X} \ra \P^1_{T}.$$

Since the quasi-finite locus of a morphism is open, shrink $T$ around $t$ such that $\sD$ is contained in the quasi finite locus of $f$ after the base change. Let $X_0'$ be the quasi-finite locus of the base change.

	\begin{center}
	\begin{tikzcd}[column sep=large]
	& f^{-1}(\infty_T)=\sD \arrow[d] \arrow[r,hook]&X_0'\arrow{d}[swap]{{quasi-finite}}\arrow[r,hook]&\tilde{X}\arrow[dl,"f"]\\
	& \infty_T\arrow[r,hook]&{\P^1_T}
	\end{tikzcd}
\end{center}
 Then the subset $W=f(\tilde{X}\setminus X_0')\subset \P^1_T$ is proper over $T$ and is contained in $\P^1_T\setminus f(H_t)=\A^1_T$. Hence, it is finite over $T$. The map $\tilde{X}\setminus f^{-1}(W)\ra \P^1_T\setminus W$, being proper and quasi-finite, is finite. By condition $(1)$, we see that $\tilde{X}\setminus f^{-1}(W)$ contains $x$.\\
	\noindent\underline{Step 3}: Now by induction there exist Nisnevich neighborhoods $B_1\ra B$ and $T_1 \ra T$ such that the projective compactification $T_1\ra \overline{T_1}$ is fiber-wise dense in the union of $n$-dimensional irreducible components over $B_1$. Take a factorization of $f$ of the form $\tilde{X} \hookrightarrow \P^{N_3}_{T_1}\times_{T_1}\P^1_{T_1} \ra \P^1_{T_1}$. Let $\overline{X_1}$ denote the reduced closure of $\tilde{X}$ in $\P^{N_3}_{\overline{T_1}}\times_{\overline{T_1}}\P^1_{\overline{T_1}}$. We get the following diagram where every square is Cartesian
	\begin{center}
	\begin{tikzcd}[row sep=large]
	X_2:=\tilde{X}\setminus f^{-1}(W) \arrow[d,hook] \arrow[r,hook]&\tilde{X}\arrow[d,hook]\arrow[r,hook]&\overline{X_1}\arrow[d,hook]\\
	\P^{N_3}_{T_1}\times_{T_1}(\P^1_{T_1}\setminus W) \arrow[r,hook]\arrow{d}&\P^{N_3}_{T_1}\times_{T_1}\P^1_{T_1} \arrow[r,hook]\arrow{d}&\P^{N_3}_{\overline{T_1}}\times_{\overline{T_1}}\P^1_{\overline{T_1}}\arrow{d}\\
	\P^1_{{T_1}}\setminus W \arrow[r,hook] &\P^1_{{T_1}}\arrow[r,hook]&\P^1_{\overline{T_1}}
	\end{tikzcd}
		\end{center}
	
	By Stein factorization we decompose the map $\overline{f_1}:\overline{X_1}\ra \P^1_{\overline{T_1}}$ as 
	$$ \overline{f_1}:\overline{X_1}\ra \overline{X_2}\xrightarrow{finite} \P^1_{\overline{T_1}},$$ where the first map has geometrically connected fibers. Since $\overline{f_1}$ is finite over the open set $\P^1_{T_1}\setminus W$, 	$\overline{X_2}\times_{\P^1_{\overline{T_1}}}(\P^1_{T_1}\setminus W)$ is isomorphic to $X_2 :=\tilde{X}\setminus f^{-1}(W) $. Since ${X_2}$ is open in $\overline{X_2}$, the fiber dimension of $\overline{X_2}$ is at least $n$. Combining this with the fact that $\overline{X_2}$ is finite over $\P^1_{\overline{T_1}}$, we conclude that the fiber dimension of $\overline{X_2}$ over $B_{1}$ is exactly $n$.
	
	We observe that since $T_{1}$ is fiberwise dense in the union of $n$-dimensional irreducible components of $\overline{T_{1}}$, so is $\P^1_{{T_1}}$ (in $\P^1_{\overline{T_1}}$). Also as $W$ is finite over $T_{1}$, $\P^1_{T_1}\setminus W$ is fiberwise dense in $\P^1_{T_1}$. Hence it is dense in the union of $n$-dimensional irreducible components of $\P^1_{\overline{T_1}}$.
	Now we claim that $X_2 $ intersects the fiber of $\overline{X_2}$ over any point $b_{1}$ of $B_{1}$. Let $X_{2}'$ be an $n$-dimensional irreducible component of the fiber $(\overline{X_2})_{{b}_{1}}$. Then the induced map $X_{2}' \rightarrow( \P^1_{\overline{T_1}})_{{b}_{1}}$ is a finite morphism of schemes of the same dimension. Hence it is a surjection to an irreducible component say, $U$ of $( \P^1_{\overline{T_1}})_{{b}_{1}}$. Further $\P^1_{T_1}\setminus W$ intersects $U$ by denseness. Taking inverse image of its intersection with irreducible component proves that $X_{2}$ intersects $Y$.
	
	As $\overline{X_2}$ is projective over $B_{1}$, we choose any embedding of it in projective space $\P^{N}_{{B}_{1}}$. Then for the closed subscheme $\overline{X_2} \setminus X_2$ (with reduced structure) there exists a hypersurface $H$ of $\P^{N}_{{B}_{1}}$ of degree, say $d$, containing $\overline{X_2} \setminus X_2$, not containing the point $x$ and such that $H_{{b}_{1}}$ intersects $({X_2})_{{b}_{1}}$ properly in $\P^{N}_{{b}_{1}}$. Hence by discussion in previous paragraph, $H_{{b}_{1}}$ also intersects $(\overline{X_2})_{{b}_{1}}$ properly. Replacing $X_{2}$ by $\overline{X_2} \setminus H$ and taking $d$ fold Veronese embedding we may assume $H$ to be $\P^{N-1}_{\infty}$. Now we have the embedding $\overline{X_2} \setminus H \hookrightarrow \A^{N}_{{B}_{1}} = \P^{N}_{{B}_{1}} \setminus \P^{N-1}_{\infty} $ thereby proving the smooth case.

We shall now consider the case when $Y$ is a divisor in a smooth scheme.

\noindent\underline{Step 4}: Let $Y$ be a divisor in a smooth scheme $X$. We will produce a map, $\psi:\tilde{Y}\rightarrow \P^{d-1}$ whose fibers are $1$-dimensional.

Since $X$ is smooth, by Steps 1-3, Nisnevich locally, we have a closed embedding $Y\rightarrow X\rightarrow\A^N_B$ such that all fibers of $\overline{Y}\rightarrow B$ are $n$-dimensional. Then by Lemma \ref{1-d}, we have a commutative diagram,
\begin{center}
		\begin{tikzcd}[row sep=tiny]
		& \tilde{Y} \arrow[dd] \arrow[r,hook,"cl."]&\tilde{\P^N_B}\arrow{dd}\arrow[r,"\psi"]&\P^{n-1}_B\\
		Y_0 \arrow[ur,hook] \arrow[dr,hook] & \\
		& \overline{Y}\arrow[r,hook,"cl."]&{\P^N_B}
		\end{tikzcd}
\end{center}
Then as in Step 2 above, we obtain a morphism Nisnevich locally on $Y$, $\phi : Y \rightarrow  \P^{1}_{T}$, where $T$ is a Nisnevich neighbourhood of $\P^{n-1}$. 
	Since $T$ is a smooth $B$-scheme, our theorem holds for $T$. 
	Remaining proof is the same as in Step 3.
\end{proof}

\section{Relative version of Gabber's Presentation Lemma }
We now prove Theorem \ref{main}. We follow \cite{schmidt2018} to prove Theorem \ref{main}, the only difference being, that in their version of Theorem \ref{ydense} (which is for Henselian DVR), they have the stronger condition of fiberwise denseness, which they use to construct a finite map $\Psi_{|Z}: Z\rightarrow \A_{S}^{d-1}$. However, we observe that their proof still goes through with our weaker condition of denseness in $n$-dimensional components, which we illustrate in Propositions \ref{misses} and \ref{finite}. The rest of the proof does not require any new inputs and we just state those results from \cite{schmidt2018} which are essentially an application of the proof from \cite{colliot}.\\


First we reduce to the case that $z$ is a closed point and $Z$ is a principal divisor. 
\begin{lemma}\label{reduction}(See \cite[Lemma 3.2.1]{colliot})
With the notation as in Theorem \ref{main}, there exists a closed point $z'\in X$ such that $z'$ is a specialization of $z$ and there exists a non-zero $f\in \Gamma(X,\sO_X)$ such that $Z\subset V(f)$.
\end{lemma}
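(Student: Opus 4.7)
The plan is to shrink $X$ to an affine open neighbourhood of $z$ — which is automatically a Nisnevich neighbourhood — and then read off both $z'$ and $f$ from elementary commutative algebra on the resulting coordinate ring. After this reduction I may write $X = \Spec A$, where $A$ is a noetherian integral domain (integrality coming from $X$ being smooth over the reduced base $S$ together with the irreducibility of $X$).

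To produce $z'$: the prime $\p \subset A$ corresponding to $z$ is a proper ideal of $A$, so by noetherianness it is contained in some maximal ideal $\m$ of $A$. The closed point corresponding to $\m$ lies in $V(\p) = \overline{\{z\}}$, and I take $z'$ to be this point.

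To produce $f$: the hypothesis $\dim(Z_s) < \dim(X_s)$ forces $Z_s \subsetneq X_s$, and hence $Z \subsetneq X$ as topological spaces. Since $A$ is a domain, the defining ideal $I(Z) \subset A$ must then be a non-zero ideal (otherwise $V(I(Z)) = \Spec A = X$). Any non-zero $f \in I(Z) \subseteq \Gamma(X,\sO_X)$ is then a non-zero global section that satisfies $Z \subset V(f)$, as required.

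The only step that really requires any care is the initial affine reduction, since on a non-affine (say projective) $X$ one could not in general find any non-constant global function vanishing on $Z$. After restricting to an affine open neighbourhood of $z$, however, smoothness, irreducibility of $X$, the relative dimension, and the fibre-dimension inequality are all preserved, so every hypothesis of Theorem \ref{main} remains in force and no substantive obstacle arises.
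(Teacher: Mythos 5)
Your argument is correct: after shrinking to a Zariski (hence Nisnevich) affine open neighbourhood of $z$, the construction of $z'$ as a maximal ideal containing $\mathfrak{p}$, and of $f$ as any non-zero element of the defining ideal of $Z$ (non-zero because $A$ is a domain and $\dim(Z_s)<\dim(X_s)$ forces $Z\subsetneq X$), is exactly the content of the cited reduction. The paper gives no proof of its own, deferring to \cite[Lemma~3.2.1]{colliot}, and your elementary commutative-algebra argument is the standard one, carried over unchanged to the noetherian-domain base.
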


\begin{remark}
Since in Theorem \ref{main}, we assume that $dim (Z_s) < dim (X_s)$, in the Lemma \ref{reduction} we furthermore assume $f$ is such that $dim((V(f)_{s})< dim (X_{s})$.
\end{remark}

\begin{remark}
Since Theorem \ref{main} is a Nisnevich local statement, henceforth we assume that the ring $R$ is Henselian local with closed point $\sigma$ and infinite residue field $k$.
\end{remark}


Let $S= Spec(R)$ with $\A^{n}_S = R[x_1,\ldots,x_n]$. Let $E$ be the $R$-span of $\{x_1,\ldots,x_n\}$ and consider $\sE:=\underline{\Spec}(Sym^{\bullet}E^{\vee})$ (note that $\sE(R)=E$). For any integer $d > 0$ and any $R$-algebra $A$, $\sE^d(A)$ parametrizes all linear morphisms $v= (v_1,\ldots,v_{d}):\A^{n}_T\rightarrow \A^{d}_T$, where $T= Spec(A)$. Considering $\A^n_S \hookrightarrow \P^{n}_S = $ Proj $S[X_0,\ldots,X_n]$, as a distinguished open subscheme $D(X_{0})$, we extend such a linear morphism to a rational map $ \overline{v}:\P^{n}_S \dashrightarrow  \P^{d}_S$ whose locus of indeterminacy $L_{v}$ is given by the vanishing locus of $v_1,\ldots,v_{d}$ and $X_0$, $V_{+}(X_0,v_1,\ldots,v_{d})  \subseteq \P^{n}_S$ (We will use this notation throughout this section). Given any closed subscheme $Y$ in $\A^n_S$, we denote by $\overline{Y}$ its projective closure in $ \P^{n}_S $. For the following lemma we refer to \cite[Lemma 2.3]{schmidt2018}
\begin{lemma}(see \cite[Lemma 2.3]{schmidt2018})\label{shafar}
	In the setting of the previous paragraph if $L_{v} \cap \overline{Y} = \emptyset$, then $\overline{v}:\overline{Y} \rightarrow  \P^{d}_S$ and $v: Y \rightarrow \A^d_S$ are finite maps.
\end{lemma}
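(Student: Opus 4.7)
The plan is to show that $\overline{v}|_{\overline{Y}}$ is a proper morphism with finite fibers, hence finite, and then derive finiteness of $v$ by restricting to the affine chart $\A^d_S = D(Y_0) \subset \P^d_S$.

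Since $\overline{v}$ is defined by the $d+1$ sections $X_0, v_1, \dots, v_d$ of $\sO_{\P^{n}_{S}}(1)$ with common vanishing locus $L_v$, the hypothesis $L_v \cap \overline{Y} = \emptyset$ means these sections have no common zero on $\overline{Y}$, so $\overline{v}|_{\overline{Y}} : \overline{Y} \to \P^d_S$ is a well-defined morphism of $S$-schemes. It is proper because $\overline{Y}$ is projective over $S$ and $\P^d_S$ is separated over $S$.

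For quasi-finiteness, fix a point $P = [a_0 : \cdots : a_d] \in \P^d_S$, choose an index $i_0$ with $a_{i_0} \neq 0$, and set $w_0 = X_0$ and $w_j = v_j$ for $j \geq 1$. On the affine chart $D(Y_{i_0}) \subset \P^d_S$, the map $\overline{v}$ is given by the rational functions $w_i/w_{i_0}$, so the fiber of $\overline{v}|_{\overline{Y}}$ over $P$ is $\overline{Y} \cap F_P \cap D(w_{i_0})$, where $F_P \subset \P^n_S$ is the linear subscheme cut out by the $d$ equations $a_{i_0} w_i - a_i w_{i_0} = 0$, $i \neq i_0$. A short calculation shows that on $F_P$ the vanishing of $w_{i_0}$ forces $w_i = 0$ for every $i$, hence $V_+(w_{i_0}) \cap F_P = L_v \cap F_P$. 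Consequently $F_P \cap D(w_{i_0}) = F_P \setminus L_v$ is either empty or the complement of a hyperplane in the projective space $F_P$, that is, an affine space over $\kappa(P)$. Since $\overline{Y} \cap L_v = \emptyset$, the fiber is a closed subscheme of this affine space; being also a fiber of a proper morphism, it is proper over $\kappa(P)$, and a proper closed subscheme of affine space over a field is finite.

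Thus $\overline{v}|_{\overline{Y}}$ is proper and quasi-finite, hence finite. For the second assertion, the preimage of $\A^d_S = D(Y_0)$ under $\overline{v}|_{\overline{Y}}$ is $D(X_0) \cap \overline{Y} = Y$, so $v : Y \to \A^d_S$ is the base change of the finite morphism $\overline{v}|_{\overline{Y}}$ along the open immersion $\A^d_S \hookrightarrow \P^d_S$, and is therefore finite as well. The key input, and in effect the main obstacle, is the identity $V_+(w_{i_0}) \cap F_P = L_v \cap F_P$, which produces the affine-complement dichotomy on $F_P$ and is the mechanism by which the hypothesis $L_v \cap \overline{Y} = \emptyset$ translates into finiteness of the fibers.
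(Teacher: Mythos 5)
Your proof is correct. The paper gives no proof of its own here — it simply cites \cite[Lemma 2.3]{schmidt2018} — and your argument is the standard one for this classical fact: $\overline{v}|_{\overline{Y}}$ is a genuine morphism since $\overline{Y}$ avoids the indeterminacy locus, it is proper since $\overline{Y}$ is projective over $S$, and each fiber lies inside $F_P \setminus L_v$, which the identity $V_+(w_{i_0}) \cap F_P = L_v \cap F_P$ exhibits as either empty or an affine space over $\kappa(P)$, so that the fiber is a proper closed subscheme of affine space and hence finite; proper plus quasi-finite gives finite, and $v$ is then the base change to the chart $D(Y_0)$, whose preimage in $\overline{Y}$ is exactly $Y$.
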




Following lemma is standard.

\begin{lemma}\label{dimdrop}
	Let $W$ be a closed subscheme of $\P^N_k$. Then there exists a hyperplane $H \subset \P^N_k$ such that $\dim_k(H\cap W)=\dim_k(W)-1$.
\end{lemma}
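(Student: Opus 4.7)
The plan is to use the infinite residue field hypothesis on $k$ (which is in force throughout this section) to find a $k$-rational hyperplane avoiding a finite collection of well-chosen closed points of $W$, and then to invoke Krull's principal ideal theorem to conclude that the intersection has dimension exactly $d-1$.

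First, I would decompose $W = W_{1}\cup\cdots\cup W_{r}\cup W'$, where $W_{1},\ldots,W_{r}$ are the irreducible components of maximal dimension $d := \dim_{k} W$ and $W'$ is the union of irreducible components of strictly smaller dimension. For each $i$, the open subset $W_{i}\setminus\bigl(\bigcup_{j\ne i}W_{j}\cup W'\bigr)$ is nonempty, so I can pick a closed point $p_{i}$ inside it; closed points exist since each $W_{i}$ is projective of finite type over the field $k$.

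Next, I would parametrize hyperplanes of $\P^{N}_{k}$ by the $k$-points of the dual projective space $(\P^{N}_{k})^{\vee}$. The condition that a hyperplane passes through a given closed point $p_{i}$ cuts out a proper $k$-rational linear subspace of $(\P^{N}_{k})^{\vee}$. Because $k$ is infinite, the set $(\P^{N}_{k})^{\vee}(k)$ cannot be covered by finitely many proper $k$-rational linear subspaces. Hence there exists a $k$-rational hyperplane $H\subset\P^{N}_{k}$ with $p_{i}\notin H$ for all $i=1,\ldots,r$.

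Finally, since $p_{i}\in W_{i}\setminus H$, we have $W_{i}\not\subseteq H$, so $H\cap W_{i}$ is cut out in $W_{i}$ by a single nonzero homogeneous equation. Krull's Hauptidealsatz (applied to the affine cone over $W_{i}$) then yields $\dim(H\cap W_{i}) = d-1$ for each $i$, while $\dim(H\cap W')\le\dim W'\le d-1$. Combining these gives $\dim_{k}(H\cap W)=d-1$. The only substantive ingredient is the infinite field hypothesis, used to produce a hyperplane missing finitely many closed points; the rest is a standard dimension count, so I do not anticipate a real obstacle.
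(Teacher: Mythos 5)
Your proof is correct and follows essentially the same strategy as the paper's: find a hyperplane, via the infinite-field fact that a $k$-vector space is not a finite union of proper subspaces, that fails to contain each top-dimensional component, then apply Krull's principal ideal theorem. The only cosmetic difference is that the paper avoids the \emph{generic} points of the components (working with the degree-one pieces of the corresponding homogeneous primes inside $\Gamma(\P^N_k,\sO(1))$), whereas you dualize and avoid suitably chosen \emph{closed} points; the two are equivalent formulations of the same linear-algebra argument.
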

\begin{proof}
	Let $\zeta_1,\ldots\zeta_r$ be the generic points of $W$ corresponding to the homogeneous prime ideals $\wp_{1},\ldots \wp_{r} $. Viewing the $\wp_{i}$'s and $\varGamma(\sO(1),\P^N_k )$ as vector spaces over the infinite field $k$, we can find a hyperplane $H$ not containing $\zeta_i$'s: as no non trivial vector space over an infinite field can be written as a finite union of proper subspaces. Hence by Krull's principal ideal theorem $\dim_k(H\cap W)=\dim_k(W)-1$. 
\end{proof}


\begin{proposition}\label{misses}
	Let $Y $ be as in Theorem \ref{ydense} and $\overline{Y}$ be its projective closure, then there exist $v_1,\ldots,v_{n}$ in the $k$-span of $\{X_1,\ldots,X_N\}$ such that $(\overline{Y})_{\sigma}\cap L_v= \emptyset$, where $L_v= V_{+}(X_0,v_1,\ldots,v_{n})$.
\end{proposition}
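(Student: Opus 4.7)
The plan is to iteratively apply a dimension-drop argument: first using the forced hyperplane $V_+(X_0)$, then a Lemma~\ref{dimdrop}-style argument $n$ more times within the restricted subspace $\mathrm{span}_k\{X_1,\ldots,X_N\}$. Since $(\overline{Y})_\sigma$ has dimension at most $n$ (as $\overline{Y}$ has fiber-dimension $n$ over $B$ in the setup of Lemma~\ref{1-d}), after $n+1$ dimension drops the intersection will be forced to become empty. Throughout, the infinitude of $k$ is used, in the style of Lemma~\ref{dimdrop}, to pick linear forms avoiding finitely many proper subspaces.

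For the initial cut by $V_+(X_0)$, the key input is Theorem~\ref{ydense}: since $Y_\sigma \subset \A^N_\sigma$ is disjoint from $V_+(X_0)$ and is dense in the union of the $n$-dimensional components of $(\overline{Y})_\sigma$, no such component lies in $V_+(X_0)$. Krull's principal ideal theorem then gives $\dim\bigl((\overline{Y})_\sigma \cap V_+(X_0)\bigr) \leq n-1$. For the remaining cuts I would argue inductively: having chosen $v_1,\ldots,v_{i-1} \in \mathrm{span}_k\{X_1,\ldots,X_N\}$ with $W_i := (\overline{Y})_\sigma \cap V_+(X_0,v_1,\ldots,v_{i-1})$ of dimension at most $n-i$, let $\wp_1,\ldots,\wp_r \subset k[X_0,\ldots,X_N]$ be the homogeneous primes at the generic points of the components of $W_i$, and seek $v_i \in \mathrm{span}_k\{X_1,\ldots,X_N\}$ outside every $\wp_j$. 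Each $\wp_j$ already contains $X_0$; if additionally $\wp_j \supseteq (X_1,\ldots,X_N)$, then $\wp_j$ would be the irrelevant ideal, contradicting that it corresponds to a point of $\P^N_k$. Hence each intersection $\wp_j \cap \mathrm{span}_k\{X_1,\ldots,X_N\}$ is a proper $k$-subspace. Since no nontrivial vector space over an infinite field is a finite union of proper subspaces, one can pick $v_i$ outside all the $\wp_j$, and Krull then gives $\dim W_{i+1} \leq \dim W_i - 1$. After $n$ such iterations the intersection $(\overline{Y})_\sigma \cap L_v$ has dimension $\leq -1$ and is empty.

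The main subtlety, and the only deviation from a direct application of Lemma~\ref{dimdrop}, is the requirement that each $v_i$ lie in the restricted subspace $\mathrm{span}_k\{X_1,\ldots,X_N\}$ rather than the full $(N+1)$-dimensional space of linear forms on $\P^N_k$. The irrelevance argument above, ensuring that this restricted subspace cannot be entirely swallowed by any generic-point prime $\wp_j$, is precisely what makes the dimension-drop trick compatible with this restriction at every stage.
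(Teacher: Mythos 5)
Your proof is correct and follows essentially the same route as the paper's: cut first by the hyperplane at infinity $V_+(X_0)$, invoke Theorem~\ref{ydense} to see that this already drops the fiber dimension to $\leq n-1$ (because $Y_\sigma$, being dense in the union of $n$-dimensional components of $(\overline{Y})_\sigma$ and living in $D(X_0)$, forces no $n$-dimensional component to lie in $H_\infty$), and then iterate a Lemma~\ref{dimdrop}-style prime-avoidance argument $n$ more times.

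The one place you go beyond the paper's terse "applying Lemma~\ref{dimdrop} repeatedly" is in addressing why the subsequent linear forms can be taken in $\mathrm{span}_k\{X_1,\ldots,X_N\}$ rather than in the full space $\Gamma(\P^N_k,\sO(1))$ that Lemma~\ref{dimdrop} as stated would allow. Your irrelevance argument (each generic-point prime $\wp_j$ already contains $X_0$, so it cannot also contain all of $X_1,\ldots,X_N$ without being the irrelevant ideal; hence $\wp_j \cap \mathrm{span}_k\{X_1,\ldots,X_N\}$ is a proper subspace) is exactly the right justification and fills a detail the paper glosses over. An alternative, equally short, way to close the same gap: pick $w$ by Lemma~\ref{dimdrop} in the full span and note that subtracting its $X_0$-coefficient leaves a form in $\mathrm{span}_k\{X_1,\ldots,X_N\}$ still outside every $\wp_j$ (since $X_0 \in \wp_j$) and cutting out the same locus inside $V_+(X_0)$. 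One small citation note: the bound $\dim (\overline{Y})_\sigma \leq n$ that your dimension count quietly relies on is not literally part of the statement of Theorem~\ref{ydense} or Lemma~\ref{1-d}, but it is established in Step~3 of the proof of Theorem~\ref{ydense} (the final closure is a subscheme of $\overline{X_2}$, which is shown to have fiber dimension exactly $n$); it would be cleaner to point there.
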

\begin{proof}
	Without loss of generality, we assume $\A^{n}_k = D(X_{0})$. Let $H_{\infty} = V_{+}(X_{0})$ denote the hyperplane at infinity of $\P^{N}_{k}$. Generic points of irreducible components of $\overline{Y_{\sigma}}$ lie in $\A^{n}_k = D(X_{0})$. Therefore $\dim(\overline{Y_{\sigma}} \cap H_{\infty} ) = n-1$. By Theorem \ref{ydense}, we have $\dim((\overline{Y})_{\sigma} \cap H_{\infty} ) = n-1$. Now applying Lemma \ref{dimdrop} repeatedly proves the claim.
\end{proof}



\begin{theorem}\label{fer}
	Let $X=Spec(A)/S$ be a smooth, equi-dimensional, affine, irreducible scheme of relative dimension $d$. Let $Z=Spec(A/f)$, $z$ be a closed point in $Z$ lying over $s\in S$, where $f$ is such that $\dim (Z_s) < \dim (X_s)$. Then there exists an open subset $\Omega\subset \sE^d$ with $\Omega(k)\neq \emptyset$ such that for all $\Phi=(\Psi,\nu)\in \Omega(k)$ the following hold
	\begin{enumerate}
	\item $\Psi_{|Z}:Z\ra \A^{d-1}_S$ is finite.
	\item $\Psi$ is \etale{} at all points of $F:=\psi^{-1}(\psi(z))\cap Z$.
	\item $\Phi_{|F}:F\ra \Phi(F)$ is radicial.
	\end{enumerate} 
\end{theorem}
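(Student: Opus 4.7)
\emph{Plan.} My plan is to exhibit non-empty Zariski open subsets $\Omega_1 \subset \sE^{d-1}$ and $\Omega_3 \subset \sE$ on which conditions (1), (2), (3) respectively hold (with $\Omega_2$ also sitting inside $\sE^{d-1}$), and to take $\Omega = (\Omega_1 \cap \Omega_2) \times \Omega_3 \subset \sE^d$, which is non-empty because $k$ is infinite. The preliminary step is to apply Theorem \ref{ydense} to $Z$ viewed as a divisor in the smooth scheme $X$: after passing to a Nisnevich neighborhood of $z$, we obtain a closed embedding $X \hookrightarrow \A^N_S$ for which $Z_\sigma$ is dense in the union of the $(d-1)$-dimensional irreducible components of its projective closure $\overline{Z}_\sigma \subset \P^N_k$. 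We write $\Phi = (\Psi, \nu)$ with $\Psi = (v_1, \ldots, v_{d-1}) \in \sE^{d-1}$ and $\nu \in \sE$.

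\emph{Finiteness (1).} Proposition \ref{misses} applied to $Y = Z$ with $n = d-1$ produces a non-empty Zariski open $\Omega_1 \subset \sE^{d-1}$ of $\Psi$ for which $L_\Psi \cap (\overline{Z})_\sigma = \emptyset$. Since $L_\Psi \cap \overline{Z}$ is proper over $S = \Spec(R)$ and its special fiber is empty, its image is a closed subset of $S$ missing the unique closed point $\sigma$; the Henselian locality of $R$ forces this image to be empty, so $L_\Psi \cap \overline{Z} = \emptyset$ globally. Lemma \ref{shafar} then gives $\Psi_{|Z} : Z \to \A^{d-1}_S$ finite.

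\emph{\Etale{}ness (2) and radicial condition (3).} Fix $\Psi \in \Omega_1$, so that $F = \Psi^{-1}(\Psi(z)) \cap Z$ is a finite set of closed points $\{z = y_0, y_1, \ldots, y_r\}$. At each $y_i$ the condition that $\Psi$ have maximal rank $d-1$ (i.e.\ be smooth of relative dimension $1$ on $X$ at $y_i$, in particular restricting to an \etale{} map $Z \to \A^{d-1}_S$ at each $y_i$ that lies in the smooth locus of $Z$) amounts to non-vanishing of a specific $(d-1) \times (d-1)$ minor of the matrix of $\Psi$ on the tangent space at $y_i$, which is a Zariski open non-empty condition on $\Psi$ since $X$ is smooth of relative dimension $d$ and $k$ is infinite. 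Taking the intersection over the finitely many $y_i$ yields the open $\Omega_2 \subset \sE^{d-1}$. For (3), fix $\Psi \in \Omega_1 \cap \Omega_2$; then each residue extension $k(y_i)/k(\Psi(z))$ is finite separable. A generic linear form $\nu \in E$ takes pairwise distinct values $\nu(y_i) \in k(y_i)$ that are moreover primitive elements for $k(y_i)/k(\Psi(z))$, which forces $\Phi_{|F}: F \to \Phi(F)$ to be a bijection inducing isomorphisms on residue fields and in particular radicial. The admissible $\nu$ form the non-empty open $\Omega_3 \subset \sE$.

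\emph{Main obstacle.} The delicate point is (2): because the finite set $F$ itself varies with $\Psi$, one must argue that maximal rank at every point of $F$ can be simultaneously arranged. I intend to handle this via the locally constant degree of $\Psi_{|Z}$ on $\Omega_1$, which bounds $|F|$ uniformly and reduces the problem to an intersection of finitely many Zariski open conditions parametrized by a suitable universal family of fibers over $\Omega_1$; a genericity argument over the infinite residue field $k$ then shows this intersection is non-empty.
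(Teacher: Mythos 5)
Your treatment of part (1) is correct and matches the paper's route: Remark \ref{rem} plus Proposition \ref{misses} plus Lemma \ref{shafar} is precisely the content of Proposition \ref{finite}, and your observation that emptiness of $L_\Psi\cap\overline Z$ propagates from the closed fiber via properness and Henselianity is the right argument.

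For parts (2) and (3) your route diverges from the paper, and this is where the gaps lie. The paper delegates both (2) and (3) to a single open subset $\Omega_2\subset\sE^d$ supplied by Proposition \ref{er} (citing \cite[Lemma~2.12]{schmidt2018}), and the final proof is just $\Omega=(\Omega_1\times\sE)\cap\Omega_2$. Your proposal instead splits (2) and (3) into $\Omega_2\subset\sE^{d-1}$ and $\Omega_3\subset\sE$ and takes a product. There are three concrete problems with this. First, the condition actually being established in Proposition \ref{er} (and the one needed downstream in Lemma \ref{setv}) is that $\Phi=(\Psi,\nu):X\to\A^d_S$ is \'etale at the points of $F$; since $X$ has relative dimension $d$, this is not a condition on $\Psi$ alone. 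Requiring $d\Psi$ to have maximal rank $d-1$ at $y_i$ is necessary but not sufficient: one still needs $d\nu$ to complement $\ker(d\Psi_{y_i})$, so your ``\'etaleness'' open cannot live in $\sE^{d-1}$. Second, the parenthetical claim that maximal rank of $\Psi$ on $X$ forces $\Psi|_Z$ to be \'etale at a smooth point of $Z$ is false as stated: $\ker(d\Psi_{y_i})$ being one-dimensional does not preclude it from lying inside the hyperplane $T_{y_i}Z$. Third, the set $\Omega_3$ you describe depends on the finite set $\{y_i\}$, which depends on $\Psi$; writing $\Omega=(\Omega_1\cap\Omega_2)\times\Omega_3$ with a single $\Omega_3\subset\sE$ is therefore not justified without further argument.

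The ``main obstacle'' you flag --- that $F$ varies with $\Psi$ --- is indeed the crux, and your proposed universal-family remedy is morally the right idea (it is essentially how \cite{colliot} and \cite{schmidt2018} set things up: one parametrizes the relevant incidence conditions over $\sE^d$ and shows the bad locus is closed and proper). But as written it remains a program, not a proof: you would need to exhibit the universal finite scheme over an open of $\sE^{d-1}$, show that the \'etale-at-$F$ and radicial-on-$F$ loci are genuinely open in $\sE^d$ (not merely fiberwise open), and verify non-emptiness of the $k$-points over the closed fiber. None of that is carried out. So while your outline for (1) is sound, (2)--(3) contain a genuine gap relative to what the paper proves via Proposition \ref{er}.
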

Recall that $\Phi:F\ra \Phi(F)$ is said to be radicial \cite[Tag 01S2]{stacksradicial} if $\Phi$ is injective and for all $x \in F$ the residue field extension $k(x)/k(\Phi(x))$ is trivial.\\
To prove this theorem, we first get an open set of finite maps in Proposition \ref{finite}. Then we get a non-empty open set of \etale{} and radicial maps in Lemma \ref{er}. 


\begin{remark}\label{rem}
	By \cite[Prop. 2.6 and Lemma 2.7]{schmidt2018} we have a closed embedding $X \hookrightarrow \A^{N}_S$ such that $Z$ (Nisnevich locally around $z$) satisfies Theorem \ref{ydense}. 
\end{remark}

\begin{proposition}\label{finite}
	Let $X$ and $Z$ be as in Theorem $\ref{fer}$ with $S$ a spectrum of a Henselian local ring $R$. Then there is an open subset $\Omega\subset \sE^{d}$ with $\Omega(R)\neq \emptyset$ such that for all $\Psi \in \Omega(R)$, $\Psi_{|Z}:Z\ra \A^{d-1}_S$ is finite.
\end{proposition}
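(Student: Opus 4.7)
The plan is to choose the first $d-1$ components $\Psi = (v_1, \ldots, v_{d-1})$ of $\Phi$ so that the indeterminacy locus $L_\Psi = V_+(X_0, v_1, \ldots, v_{d-1})$ is disjoint from the projective closure $\overline{Z} \subset \P^N_S$, and then invoke Lemma \ref{shafar}. By Remark \ref{rem} I would first replace $X$ by a suitable Nisnevich neighbourhood of $z$ and fix a closed embedding $X \hookrightarrow \A^N_S$ so that $Z$ satisfies the conclusion of Theorem \ref{ydense}; in particular, $Z_\sigma$ is dense in the union of the $(d-1)$-dimensional irreducible components of $(\overline{Z})_\sigma$.

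Next, I would construct $\Omega$ as the complement of the bad-projection locus. Let
$$\mathcal{I} := \{(v, x) \in \sE^{d-1} \times_S \overline{Z} : v_1(x) = \cdots = v_{d-1}(x) = X_0(x) = 0\}$$
be the universal incidence scheme, a closed subscheme of $\sE^{d-1} \times_S \overline{Z}$. Since $\overline{Z} \to S$ is proper, so is the projection $\sE^{d-1} \times_S \overline{Z} \to \sE^{d-1}$, whence the image $B \subset \sE^{d-1}$ of $\mathcal{I}$ is closed. Set $\Omega' := \sE^{d-1} \setminus B$ and let $\Omega \subset \sE^d$ be the preimage of $\Omega'$ under the projection $\sE^d \to \sE^{d-1}$ forgetting the last coordinate. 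Unwinding definitions, a point $w \in \sE^{d-1}$ lies in $\Omega'$ iff $L_w \cap (\overline{Z})_{\pi(w)} = \emptyset$, where $\pi : \sE^{d-1} \to S$ is the structure map.

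For non-emptiness of $\Omega(R)$: Proposition \ref{misses} applied to $Y = Z$ (with $n = d-1$) supplies $\bar v \in \Omega'(k)$. Since $\sE^{d-1}$ is an affine space over $R$ and $R \to k$ is surjective, I lift $\bar v$ to some $v \in \sE^{d-1}(R)$. Because $\Omega'$ is open and $v(\sigma) = \bar v \in \Omega'$, the open preimage $v^{-1}(\Omega') \subset S$ contains $\sigma$ and therefore equals $S$, as $S$ is local. Thus $v \in \Omega'(R)$, and extending by any $\nu \in \sE(R)$ produces a point of $\Omega(R)$.

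Finally, for $\Psi \in \Omega(R)$ the closed subscheme $\overline{Z} \cap L_\Psi \subset \P^N_S$ is proper over $S$; its image in $S$ is closed and misses $\sigma$ by construction, so locality of $S$ forces this image to be empty, giving $\overline{Z} \cap L_\Psi = \emptyset$. Lemma \ref{shafar} then yields that $\Psi : Z \to \A^{d-1}_S$ is finite. The point requiring the most care is the combination of the first two steps: the key difference from the Dedekind-base setting of \cite{schmidt2018} is that $\overline{Z}$ is not known to be equi-dimensional over $S$, so the generic-choice argument (Proposition \ref{misses}) must be run over the closed fiber alone and then lifted; the upgrade from fiber-wise to global disjointness is a formal consequence of properness together with the local structure of $S$.
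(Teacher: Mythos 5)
Your proof is correct and follows essentially the same route as the paper: construct the universal incidence locus of linear projections that fail to miss $\overline Z$ at infinity, use properness of $\overline Z/S$ to see its projection to $\sE^{d-1}$ (equivalently $\sE^d$) is closed, take the open complement $\Omega$, appeal to Proposition~\ref{misses} for a $k$-point in the closed fiber, lift to an $R$-point, and finish with Lemma~\ref{shafar}. The only cosmetic difference is that you invoke only locality of $S$ (an open subset containing the closed point must be all of $S$) to upgrade the lifted $R$-point of affine space to an $R$-point of $\Omega$, whereas the paper phrases the lifting step via Henselianity; this is a minor streamlining, not a different argument.
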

\begin{proof}
  We proceed as in \cite[Lemma 2.11]{schmidt2018}. By Remark \ref{rem} we have closed embedding $X \hookrightarrow \A^{N}_S$. Viewing $\sE^{d-1}$ as a closed subscheme of $\sE^{d}$ by taking the first $d-1$ factors we	consider the closed subscheme 
\begin{center}
	$V = \sE^{d-1}  \times_{S} H_{\infty}  \hookrightarrow \sE^{d} \times_{S} H_{\infty}$
\end{center}
where $H_{\infty}$ is the hyperplane at infinity in $\P^{N}_{S}$. Note that $V \rightarrow \sE^{d}$ has fiber $V_{v}= L_{(v_1,\ldots,v_{d-1})}$ for any $v= (v_1,\ldots,v_{d}) \in \sE^{d}(R)$. Consider the open subscheme $\Omega$ of $\sE^{d}$ defined as 

	$$\sE^{d} \setminus p_{1}(V \cap (\sE^{d} \times _{S}(\overline{Z} \cap H_{\infty})) ), $$

\noindent where $p_{1}$ is projection of $\sE^{d-1} \times_{S} H_{\infty}$ onto the first factor. By construction every point in $\Omega(R)$ consists of a linear map $v=(v_1,\ldots,v_{d}) : \A^N_{S} \rightarrow \A^d_{S} $ such that $L_{v'} \cap \overline{Z} = \emptyset $, where $v'= (v_1,\ldots,v_{d-1})$. By Lemma \ref{shafar}, this will be our required finite map, thus  proving $\Omega(R) \neq \emptyset$ will finish the proposition. As $R$ is Henselian local, the induced map from $\Omega(R)$ to $\Omega(k)$ is surjective, hence it suffices to prove $\Omega(k) =  \Omega_{\sigma}(k) \neq \emptyset $. By construction we have, $\Omega_{\sigma}(k) = \sE^{d}_{\sigma} \setminus p_{1}(V_{\sigma} \cap (\sE^{d}_{\sigma} \times _{S}((\overline{Z} )_{\sigma}\cap H_{\infty})) )$ and any point in $\Omega(k)$ gives a linear map $u=(u_1,\ldots,u_{d}) : \A^N_{k} \rightarrow  \A^d_{k} $ such that $L_{u'} \cap (\overline{Z})_\sigma = \emptyset  $, where $u'= (u_1,\ldots,u_{d-1})$. By Lemma \ref{misses} such a map exists.
\end{proof}

\begin{proposition}\label{er} Let $\phi=(\psi,\nu)=(u_1,\ldots,u_d): X\rightarrow \A^{d-1}_S\times \A^1_S$ and $F:=\psi^{-1}(\psi(z))\cap Z$.
	There exists an open set $\Omega_2\subset \sE^d$ such that $\Omega_2(R)\neq \emptyset$ and for any $\phi\in\Omega_2(R)$
	\begin{enumerate}
\item $\phi$ is \etale{} at all points of $F$.
\item $\phi_{|F}:F\ra \phi(F)$ is radicial.
	\end{enumerate}  
\end{proposition}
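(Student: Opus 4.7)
The plan is to express both (1) and (2) as open conditions on the affine space $\sE^d$, construct a nonempty open $\Omega_2 \subset \sE^d$ satisfying them on the closed fiber $\sE^d_\sigma \cong \A^{Nd}_k$, and then lift a $k$-point to an $R$-point using that $R$ is Henselian local with infinite residue field $k$. Throughout, we may work inside the open $\Omega$ of Proposition \ref{finite}, so that $\psi|_Z : Z \to \A^{d-1}_S$ is finite and $F = \psi^{-1}(\psi(z)) \cap Z$ is a finite set $\{x_1, \ldots, x_r\}$ of closed points of $X_\sigma$.

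For condition (1), at each $x_i$ the étaleness of $\phi$ translates, via the smoothness of $X_\sigma$ and $\A^d_\sigma$ together with the Jacobian criterion, to the requirement that the differentials $du_1|_{x_i}, \ldots, du_d|_{x_i}$ form a $k(x_i)$-basis of the cotangent space $T^*_{x_i} X_\sigma$. Using the closed embedding $X \hookrightarrow \A^N_S$ supplied by Remark \ref{rem}, the canonical surjection $E \otimes_k k(x_i) \twoheadrightarrow T^*_{x_i} X_\sigma$ (where $E$ is the $k$-span of the coordinate linear forms on $\A^N$) exhibits this étaleness locus as the complement of the vanishing of a determinantal polynomial, a proper closed subvariety of $\sE^d_\sigma$ defined a priori over $k(x_i)$. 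Writing this polynomial in a $k$-basis of $k(x_i)$ and using that $k$ is infinite gives $k$-points in its complement. Intersecting over the finite set $F$ inside the irreducible $\sE^d_\sigma$ yields a nonempty open $\Omega_{\text{\'et}} \subset \sE^d$.

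For condition (2), radicialness of $\phi|_F$ amounts to: (i) $\phi$ separates the finitely many geometric points of $F$, and (ii) for each $x \in F$, the subfield $k(u_1(x), \ldots, u_d(x)) \subset k(x)$ equals $k(x)$. Both are open and nonempty over $k$: for (i), the geometric points of $F$ are pairwise distinct in $\A^N_{\bar k}$, so generic $k$-linear combinations of the coordinate forms separate them, and $k$ infinite ensures the complement of finitely many proper $k$-subspaces in $E$ is nonempty; for (ii), by the primitive element theorem applied to $k(x)/k$ it suffices that some $u_\ell(x)$ generate $k(x)$ over $k$, which again uses $k$ infinite. Let $\Omega_{\text{rad}}$ denote the resulting open subset; then $\Omega_2 := \Omega_{\text{\'et}} \cap \Omega_{\text{rad}}$ is nonempty over $k$ by the irreducibility of $\sE^d_\sigma$, and lifts to $\Omega_2(R) \neq \emptyset$ by the Henselian property of $R$.

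The main obstacle is the mismatch between the $k$-structure of the parameter space $\sE^d$ and the residue field extensions $k(x_i)/k$ that appear in the étaleness and radicialness conditions. This is handled by expressing each defining polynomial in a $k$-basis of the relevant $k(x_i)$ and exploiting the infinitude of $k$, the same mechanism underlying Lemma \ref{dimdrop} and Proposition \ref{misses}.
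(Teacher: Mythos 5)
Your strategy — work over the closed fiber, express conditions (1) and (2) as open conditions on $\sE^d_\sigma$, use that $k$ is infinite to find $k$-points, and lift to $R$-points by the Henselian property — is the right one and is the standard approach underlying \cite[Lemma 2.12]{schmidt2018}, to which the paper defers without reproducing the proof. However, there is a genuine logical gap in how you produce $\Omega_{\text{\'et}}$ and $\Omega_{\text{rad}}$. You fix $F = \{x_1,\ldots,x_r\}$ and intersect the finitely many opens ``$\phi$ \'etale at $x_i$'', but $F$ is a function of $\psi$, hence of $\phi$: as $\phi$ ranges over $\sE^d_\sigma$ the points of $F(\phi)$ move, so the locus of $\phi$ satisfying ``$\phi$ is \'etale at all points of $\psi^{-1}(\psi(z))\cap Z$'' is not the finite intersection you wrote, and an open defined relative to a single $F_0 = F(\psi_0)$ does not certify conditions (1)--(2) for other $\phi\in\Omega_2(R)$. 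To make openness precise one should observe that over the finiteness open of Proposition~\ref{finite} the incidence scheme $\{(\phi,x) : x \in Z_\sigma,\ \psi(x)=\psi(z)\}$ is finite (hence proper) over $\sE^d$, so the bad locus where \'etaleness or separation fails has closed image; the desired open is the complement, and its nonemptiness can then be checked at a single well-chosen $\phi_0$ exactly by the mechanism you describe. Your write-up conflates ``exhibiting one good $\phi$'' with ``exhibiting a nonempty open of good $\phi$''.

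A secondary issue: for condition (ii) of radicialness your appeal to the primitive element theorem is not valid when $k(x)/k$ is inseparable, which can occur even for $k$ infinite in positive characteristic. If the extension is not simple, no single linear form $u_\ell(x)$ generates it. One must instead argue that $u_1(x),\dots,u_d(x)$ jointly generate $k(x)$ over $k$ for generic choices of the $u_\ell$, or reduce to a statement about the extension $k(x)/k(\psi(x))$. This needs to be spelled out in a self-contained proof.
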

\begin{proof} See \cite[Lemma 2.12]{schmidt2018}.
\end{proof}
\begin{proof}[Proof of Theorem \ref{fer}.]
	Let $\Omega_1$ and $\Omega_2$ be as in the Propositions \ref{finite} and \ref{er}. Then the set $\Omega=(\Omega_1\times \sE)\cap\Omega_2$ satisfies all the required conditions.
\end{proof}
Now we obtain the sets $U$ and $V$. The sets $U$ and $V$ are constructed to satisfy all the conditions of Theorem \ref{main}.
\begin{lemma}\label{setv}
	Let $\Phi=(\Psi,\nu)$ satisfy conditions (1)-(3) of Theorem \ref{fer}. Then there exists an open neighborhood $V\subset \A^{d-1}_S$ of $\Psi(z)$ such that
	\begin{enumerate}
		\item $\Phi$ is \etale{} at all points of $Z\cap \Psi^{-1}(V)$.
		\item $\Phi|_{Z\cap \Psi^{-1}(V)}:Z\cap \Psi^{-1}(V)\rightarrow \A^1_V$ is a closed immersion.
	\end{enumerate}
\end{lemma}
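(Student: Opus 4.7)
The plan is to shrink $V$ in two stages: first enforce condition (1) using openness of the \etale{} locus in $X$, and then enforce condition (2) by a further shrinking that exploits openness of the monomorphism locus for a finite unramified morphism. Throughout, the finiteness of $\Psi|_Z$ from Theorem \ref{fer}(1) is the mechanism that converts open conditions on $X$ (or on $Z$) into open conditions on the base $\A^{d-1}_S$.

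For the first stage, I would let $U^{\mathrm{et}} \subseteq X$ denote the open locus where $\Phi$ is \etale{}. By Theorem \ref{fer}(2) we have $F \subseteq U^{\mathrm{et}}$. Since $\Psi|_Z$ is finite and hence closed, $\Psi(Z \setminus U^{\mathrm{et}})$ is closed in $\A^{d-1}_S$ and misses $\Psi(z)$, because $\Psi^{-1}(\Psi(z)) \cap Z = F$ lies entirely in $U^{\mathrm{et}}$. Setting $V_1 := \A^{d-1}_S \setminus \Psi(Z \setminus U^{\mathrm{et}})$ gives an open neighbourhood of $\Psi(z)$ with $Z \cap \Psi^{-1}(V_1) \subseteq U^{\mathrm{et}}$, so condition (1) holds on $V_1$.

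For the second stage, put $Z_1 := Z \cap \Psi^{-1}(V_1)$ and $g := \Phi|_{Z_1} : Z_1 \to \A^1_{V_1}$. Its composition with $\A^1_{V_1} \to V_1$ is $\Psi|_{Z_1}$, which is finite, so $g$ is finite. Because $\Phi$ is \etale{} on $U^{\mathrm{et}} \supseteq Z_1$ and $Z_1 \hookrightarrow X$ is a closed immersion, $g$ is unramified. The diagonal $\Delta : Z_1 \to Z_1 \times_{\A^1_{V_1}} Z_1$ is therefore an open immersion, and
\[
C \; := \; p_1\bigl( (Z_1 \times_{\A^1_{V_1}} Z_1) \setminus \Delta(Z_1) \bigr) \subseteq Z_1
\]
is closed, since $p_1$ is finite. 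This $C$ is precisely the locus where $g$ fails to be a monomorphism at the given point. Theorem \ref{fer}(3) now yields $F \cap C = \emptyset$: for $z' \in F$ and $z'' \in Z_1$ with $g(z'') = g(z')$, equality of $\Psi$-components forces $z'' \in F$, and radiciality of $\Phi|_F$ then forces $z'' = z'$ with trivial residue extension. Since $\Psi|_{Z_1}$ is closed, $\Psi(C)$ is closed in $V_1$ and misses $\Psi(z)$. Defining $V := V_1 \setminus \Psi(C)$, one has $Z \cap \Psi^{-1}(V) \subseteq Z_1$ disjoint from $C$, so $g$ restricted over $V$ is finite, unramified, and a monomorphism, hence a closed immersion.

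The main obstacle I expect is the bookkeeping around what may be shrunk: $V \subseteq \A^{d-1}_S$ is allowed to vary, but the target $\A^1_V$ comes along with it, so "finite unramified monomorphism" must be enforced on the whole of $\A^1_V$, not merely near $\Phi(z)$. The device that makes this work is to realise the non-monomorphism locus as a closed subset $C$ of $Z_1$, push it down through the finite map $\Psi|_{Z_1}$, and remove the resulting closed subset of $V_1$; the propagation of the open conditions from the fibre $F$ to the base hinges on the finiteness of $\Psi|_Z$.
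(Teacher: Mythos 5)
Your proof is correct and follows the same strategy as the argument the paper defers to (\cite[Lemma 2.13]{schmidt2018}, after \cite{colliot}): finiteness of $\Psi|_Z$ pushes the closed non-\etale{} locus, and then the closed locus where $\Phi|_{Z_1}$ fails to be a monomorphism, down to $\A^{d-1}_S$, and one removes their images from the base. The only packaging difference is in the second step, where you identify the bad locus as $p_1$ of the complement of the open diagonal inside $Z_1 \times_{\A^1_{V_1}} Z_1$, whereas the cited proof works instead with the support of the cokernel of $\sO_{\A^1_{V_1}} \to g_*\sO_{Z_1}$ together with Nakayama's lemma; the two devices are interchangeable and both reduce to the standard fact that a finite unramified monomorphism is a closed immersion.
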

	\begin{proof}  See \cite[Lemma 2.13]{schmidt2018}.
	


\end{proof}
\begin{lemma}\label{setu}
There exists a closed subset $\mathfrak{U}\subset \Psi^{-1}(V)$ such that
	\begin{enumerate}
		
		\item $U_1=\Psi^{-1}(V)\setminus\mathfrak{U}$ contains $z$ 
		\item $U_1$ satisfies $Z\cap \Psi^{-1}(V)=Z\cap U_1 $ 
		and $\Phi^{-1}(\Phi(Z\cap U_1)) \cap U_1=Z\cap U_1$.
	\end{enumerate}
\end{lemma}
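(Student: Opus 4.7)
The plan is to take $T := \Phi^{-1}(\Phi(Z \cap \Psi^{-1}(V))) \cap \Psi^{-1}(V)$ and set $\mathfrak{U} := T \setminus (Z \cap \Psi^{-1}(V))$, so that $U_1 = \Psi^{-1}(V) \setminus \mathfrak{U}$. Lemma \ref{setv}(2) says that $\Phi|_{Z \cap \Psi^{-1}(V)}$ is a closed immersion, so $C := \Phi(Z \cap \Psi^{-1}(V))$ is a closed subscheme of $\A^1_V$ and $T$ is closed in $\Psi^{-1}(V)$. With this choice, both conditions of the lemma follow formally: by construction $\mathfrak{U}$ is disjoint from $Z \cap \Psi^{-1}(V)$, so $z \in U_1$, $Z \cap U_1 = Z \cap \Psi^{-1}(V)$, and $\Phi^{-1}(\Phi(Z \cap U_1)) \cap U_1 = T \cap U_1 = Z \cap \Psi^{-1}(V) = Z \cap U_1$.

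The only substantive point is to verify that $\mathfrak{U}$ is closed in $\Psi^{-1}(V)$. Since $T$ is already closed, this reduces to the claim that $Z \cap \Psi^{-1}(V)$ is open in $T$. For this I would first use Lemma \ref{setv}(1) together with the fact that the \etale{} locus is open to produce an open neighborhood $U' \subset \Psi^{-1}(V)$ of $Z \cap \Psi^{-1}(V)$ on which $\Phi$ is \etale. Base-changing $\Phi|_{U'}$ along the closed immersion $C \hookrightarrow \A^d_S$, the induced map $\Phi|_{T \cap U'} : T \cap U' \to C$ is then \etale. By Lemma \ref{setv}(2), $\Phi$ identifies $Z \cap \Psi^{-1}(V)$ isomorphically with $C$, so the inclusion $Z \cap \Psi^{-1}(V) \hookrightarrow T \cap U'$ is a section of the \etale{} morphism $T \cap U' \to C$. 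Invoking the standard fact that a section of an \etale{} morphism is an open immersion, $Z \cap \Psi^{-1}(V)$ is open in $T \cap U'$, hence in $T$.

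The hard part is precisely this openness claim, as it is the step where the two hypotheses of Lemma \ref{setv}---\etaleness{} of $\Phi$ along $Z \cap \Psi^{-1}(V)$ and the closed-immersion property of $\Phi|_{Z}$---combine to force $Z \cap \Psi^{-1}(V)$ to be a clopen "component" of $T$. Once this is in place, the verification of (1) and (2) is purely formal set-theoretic bookkeeping from the definitions of $\mathfrak{U}$ and $U_1$.
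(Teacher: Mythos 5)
Your proposal is correct, and it is essentially the same argument as the one the paper defers to (\cite[Lemma 2.14]{schmidt2018}, following \cite{colliot}): one takes $\mathfrak{U}=\Phi^{-1}(\Phi(Z\cap\Psi^{-1}(V)))\cap\Psi^{-1}(V)\setminus(Z\cap\Psi^{-1}(V))$ and shows it is closed because, with $\Phi$ \'etale along $Z\cap\Psi^{-1}(V)$ and $\Phi|_{Z\cap\Psi^{-1}(V)}$ a closed immersion, $Z\cap\Psi^{-1}(V)$ becomes a section of an \'etale morphism over $C=\Phi(Z\cap\Psi^{-1}(V))$, hence open in $T=\Phi^{-1}(C)\cap\Psi^{-1}(V)$. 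The remaining set-theoretic verifications are exactly the formal bookkeeping you describe.
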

\begin{proof} See \cite[Lemma 2.14]{schmidt2018}
\end{proof}
\begin{proof}[Proof of Theorem \ref{main}]
	Let $U_2$ be the open locus where $\Phi$ is \etale{}. From Lemma \ref{setv} $z\in U_2$ and $Z\cap \Psi^{-1}(V)\subset U_2$. Now let $U=U_1\cap U_2$, with $U_1$ as in Lemma \ref{setu}. Then $U$ also satisfies conditions $(2)$ and $(3)$ of Lemma \ref{setu}. Furthermore $\Psi_{U}$ is \etale{}. Hence we get $\Phi,\Psi,U,V$ satisfying all the conditions of Theorem \ref{main}.
\end{proof}

\section{Stable Connectivity}   In this section we give a sketch of the proof of Theorem \ref{connectivity}. We do not claim any originality here and all the proofs of the statements in this section can be  found in  \cite[\S 4]{schmidt2018}. Throughout this section $Sm_{S}$ will denote the category of smooth schemes over a given scheme $S$.

\begin{lemma} \label{zero}
	Let $Spec(R)=S$ be a noetherian scheme of finite Krull dimension with a codimension $d$ point $s \in S$. Let $E \in \sS\sH^{s}_{S^1 \geq d+1}(S)$. Then for any $X \in Sm_{S}$ with $X_{s} \neq \emptyset $  and any $f  \in [ \Sigma^{\infty}_{S^{1}} X_{+} ,  L^{\A^1} E ]$ in $ \sS\sH^{s}_{S^1}(S)$, there exists an open subscheme $U$ in $X$ such that $U$ intersects each irreducible component of $X_{s} $ non trivially and $f|_{\Sigma^{\infty}_{S^{1}} U_{+} } = 0$.
\end{lemma}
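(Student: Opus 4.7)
The plan is to follow the argument of Schmidt--Strunk \cite[\S 4]{schmidt2018} essentially verbatim, proceeding by induction on the fiber dimension $n = \dim X_s$, with Theorem \ref{main} supplying the Gabber-type geometric input in the required generality.

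For the base case $n = 0$, the fiber $X_s$ is a finite disjoint union of closed points. Nisnevich shrinkage reduces to the situation where each such point has residue field $k(s)$, so that $f$ is essentially pulled back along an \etale{} neighborhood of $s \in S$. The vanishing of $f$ on a suitable open then follows from Morel's $\A^1$-connectivity theorem \cite[Thm.\ 6.1.8]{morel2005} applied to $E|_{k(s)}$: since $E|_{k(s)}$ is $(d+1)$-connected, so is $L^{\A^1}(E|_{k(s)})$ over the field $k(s)$, and this suffices to kill $f$ on a neighborhood of the relevant points.

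For the inductive step $n \geq 1$, I would treat each irreducible component $C$ of $X_s$ separately. Picking a closed point $z \in C$ and shrinking $X$ Zariski-locally so that it is smooth and equidimensional of relative dimension $n$ at $z$, Lemma \ref{reduction} together with the hypothesis $\dim Z_s < \dim X_s$ provides a principal divisor $Z = V(g) \subset X$ through $z$. Applying Theorem \ref{main} to $(X, Z, z)$ then yields, after Nisnevich shrinkage, a map $\Phi = (\Psi, \nu) : X \to \A^{n-1}_S \times \A^1_S$ and opens $V \subset \A^{n-1}_S$, $U \subset \Psi^{-1}(V)$ containing $z$ satisfying properties (1)--(5). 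Conditions (3)--(5) produce the elementary Nisnevich distinguished square
\[
\begin{tikzcd}
U \setminus (Z \cap U) \arrow[r] \arrow[d] & U \arrow[d, "\Phi|_U"] \\
\A^1_V \setminus \Phi(Z \cap U) \arrow[r] & \A^1_V.
\end{tikzcd}
\]
The associated Mayer--Vietoris sequence for $[-, L^{\A^1} E]$, together with the $\A^1$-invariance of $L^{\A^1} E$ (so that maps out of $\Sigma^\infty (\A^1_V)_+$ factor through $\Sigma^\infty V_+$), reduces the vanishing of $f|_U$ to vanishing of induced classes on the smooth $S$-scheme $V \subset \A^{n-1}_S$. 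Since $V$ has fiber dimension at most $n - 1$ over $s$, the inductive hypothesis applies and supplies the required vanishing.

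The main obstacle will be the careful book-keeping in the inductive step: organizing the Mayer--Vietoris sequence and verifying that the inductive hypothesis applies to each of the lower-dimensional pieces produced by Theorem \ref{main} (notably the complement $\A^1_V \setminus \Phi(Z \cap U)$, which by condition (2) is $\A^1_V$ minus a finite closed subscheme over $V$). All of these details are already carried out in \cite[\S 4]{schmidt2018} in the Dedekind setting, and since Theorem \ref{main} is the only geometric input requiring generalization, the same arguments transfer to our setting verbatim.
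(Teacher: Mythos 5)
Your proposal mistakes the role of Lemma \ref{zero} in the paper's architecture and as a result invokes Gabber's presentation lemma where it is not used. The paper's proof of Lemma \ref{zero} does not appeal to Theorem \ref{main} at all: for each irreducible component $Z_i$ of $X_s$, the proof of \cite[Lemma 4.9]{schmidt2018} (a generic-point and continuity argument combined with Morel's field-case connectivity theorem, requiring no presentation lemma) yields an open $U_i\subset X$ with $U_i\cap Z_i\neq\emptyset$ and $f|_{\Sigma^\infty_{S^1}(U_i)_+}=0$; one sets $U=\bigcup_i U_i$ and uses that $L^{\A^1}E$ is a fibrant Nisnevich sheaf of spectra to glue and conclude $f|_{\Sigma^\infty_{S^1}U_+}=0$. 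What you describe is instead the structure of the proof of Theorem \ref{connectivity}: there one first applies Lemma \ref{zero} to produce $U$, then takes $Z=X\setminus U$ (which is what furnishes $\dim Z_\sigma<\dim X_\sigma$), applies Theorem \ref{main} to this $Z$, and feeds the resulting Nisnevich square into Lemma \ref{dist} to get $\pi_0^{\A^1}(X/U)=0$.

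Beyond the misattribution, your inductive step has genuine gaps. There is no closed subscheme $Z$ in the hypotheses of Lemma \ref{zero}, so Lemma \ref{reduction} and "the hypothesis $\dim Z_s<\dim X_s$" have nothing to act on; you would have to manufacture a $Z$, and then it bears no relation to $f$. More seriously, the reduction to $V$ fails: $f|_U$ lives on $U$, and $\Phi|_U:U\to\A^1_V$ is only \'etale, so there is no factorization of $f|_U$ through $\Sigma^\infty(\A^1_V)_+$ that would let $\A^1$-invariance and the inductive hypothesis on $V$ kill it. The Nisnevich square gives an identification $U/(U\setminus Z)\simeq\A^1_V/(\A^1_V\setminus\Phi(Z\cap U))$, not a map of $f|_U$ to a class on $V$. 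Finally, $\A^1_V\setminus\Phi(Z\cap U)$ is open in $\A^1_V$ and therefore has fiber dimension $n$, not $n-1$, so the inductive hypothesis does not apply to it as a "lower-dimensional piece."
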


\begin{proof}
	Let $Z_{i}$'s be the irreducible components of $X_{s}$ . From the proof of \cite[Lemma 4.9 ]{schmidt2018} we obtain  open subschemes $U_{i}$'s of $X$  such that $U_{i} \cap Z_{i} \neq \emptyset$ and $f|_{\Sigma^{\infty}_{S^{1}} (U_{i })_+}  = 0$. Define $U$ to be union of all such $U_{i}$'s. Since the left Quillen functor $\Sigma^{\infty}_{S^{1}}$ gives an adjunction at the level of homotopy category and  $L^{\A^1} E$ (apart from being a fibrant object in $ \sS\sH^{s}_{S^1}(S)$ ) is a spectrum of Nisnevich sheaves, we have $f|_{\Sigma^{\infty}_{S^{1}} U_{+} } = 0$.
\end{proof}

\begin{lemma}\label{dist}
Let $X$ be a smooth irreducible scheme over $S$  and $U$ be a non-empty open subscheme of $X$. Denote by $Z$ the reduced closed subscheme $ X \setminus U$. Suppose Nisnevich locally on $X$ we have the following Nisnevich distinguished square  

\begin{center}
	\begin{tikzcd}
		& U \arrow[d] \arrow[r]&X\arrow[d,"p"]\\
		& \A^{1}_V \setminus p(Z) \arrow[r]&{\A^{1}_V}
	\end{tikzcd}
\end{center}
 where the map $p : X \rightarrow \A^{1}_V$ in $Sm_{S}$  is \etale{}, with $Z \rightarrow V$ finite. Then $\pi^{\A^{1}}_{0}(X/U)=0$.
\end{lemma}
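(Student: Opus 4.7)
The plan is to use the Nisnevich distinguished square of the hypothesis to replace $X/U$ with a more tractable quotient of $\A^{1}_V$, and then to exhibit an explicit $\A^{1}$-null-homotopy via a straight-line interpolation against a section $\sigma : V \to \A^{1}_V$ whose image avoids $p(Z)$.

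Because a Nisnevich distinguished square is, by definition, a pushout in the category of Nisnevich sheaves, taking cofibers of the vertical arrows gives a canonical identification
$$X/U \;\cong\; \A^{1}_V / \bigl(\A^{1}_V \setminus p(Z)\bigr) \;=:\; F$$
of pointed Nisnevich sheaves. Since $\pi^{\A^{1}}_{0}$ is itself a Nisnevich sheaf and the hypothesis provides the square only Nisnevich-locally on $X$, the problem reduces to showing $\pi^{\A^{1}}_{0}(F)=0$, where $Z' := p(Z) \subset \A^{1}_V$ is finite over $V$. I would then construct, Nisnevich-locally on $V$, a section $\sigma : V \to \A^{1}_V$ of the projection with $\sigma(V) \cap Z' = \emptyset$: over any $v \in V$, the fiber $Z'_v$ is a finite subset of $\A^{1}_{\kappa(v)}$, and because the residue field $\kappa(v)$ is infinite (a standing assumption of the paper) the complement contains $\kappa(v)$-rational points; Hensel's lemma applied to the smooth surjection $\A^{1}_V \setminus Z' \to V$ then lifts any such choice to a section over a Nisnevich neighborhood of $v$.

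Given $\sigma$, to produce the null-homotopy I would take any $W \in Sm_{S}$, represent a class of $\pi^{\A^{1}}_{0}(F)(W)$ (after a Nisnevich refinement of $W$) by a lift $\tilde\psi = (\tilde\psi_{1}, \tilde\psi_{2}) : W \to \A^{1} \times V = \A^{1}_V$, and consider
$$H : \A^{1} \times W \to \A^{1}_V, \qquad (t, w) \mapsto \bigl((1-t)\,\tilde\psi_{1}(w) + t\,\sigma(\tilde\psi_{2}(w)),\; \tilde\psi_{2}(w)\bigr).$$
Post-composing with the quotient $q : \A^{1}_V \twoheadrightarrow F$ yields a map $\A^{1} \times W \to F$ equal to the given class at $t=0$, and at $t=1$ factoring through $\sigma(V) \subset \A^{1}_V \setminus Z'$, hence collapsing to the basepoint. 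This provides the required $\A^{1}$-null-homotopy.

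The main obstacle is the second step, the Nisnevich-local construction of $\sigma$, which is precisely where the infinite-residue-field assumption of the paper is used in a genuine way. Once $\sigma$ is in hand the rest is a formal straight-line interpolation along the fibers of $\A^{1}_V \to V$, and no further input (e.g.\ homotopy purity, which would require extra smoothness of $Z'$) is needed.
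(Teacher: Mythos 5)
Your proof is correct and follows the standard approach of Morel (Lemma~6.1.4 of his book) and of Schmidt--Strunk, which is precisely what the paper cites: use the Nisnevich distinguished square to identify $X/U$ with $\A^1_V/(\A^1_V\setminus p(Z))$, produce a section of $\A^1_V\to V$ avoiding $p(Z)$ Nisnevich-locally via the infinite-residue-field hypothesis, and collapse by a fiberwise straight-line $\A^1$-homotopy onto that section. The one point worth making explicit is that your argument produces a \emph{naive} $\A^1$-null-homotopy, which suffices because the naive $\pi_0$ surjects onto $\pi_0^{\A^1}$ (Morel--Voevodsky); this reduction is also implicit in the cited references.
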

\begin{proof}
	See \cite[Lemma 4.6 and Cor. 4.7]{schmidt2018}.
\end{proof}
Lemmas \ref{zero} and \ref{dist} together give a bound on the drop in connectivity, which is sufficient to prove the connectivity result Theorem \ref{connectivity}, for details see \cite[Prop. 4.5]{schmidt2018}.
\begin{remark}
	Note that if $X, Z $ and $S$ in previous lemma satisfy the conditions stated in Theorem \ref{main},  we obtain the distinguished square of previous lemma and hence  $\pi^{\A^{1}}_{0}(X/U)=0$.  
\end{remark}

\begin{remark} \label{Hensel} To prove stable connectivity we can assume $S$ to be Henselian local by  \cite[ Lemma 4.10]{schmidt2018}
\end{remark}

\begin{proof}[Proof of Theorem \ref{connectivity}]
	We proceed by induction on the dimension of $S$. The case, $dim(S)=0$ follows from \cite{morel2005}. By Remark \ref{Hensel}, we may assume $S$ to be Henselian local with closed point $\sigma$. Further we can assume $X_{\sigma} \neq \emptyset$, where $X \in Sm_{s}$. Consider $f  \in [ \Sigma^{\infty}_{S^{1}} X_{+} ,  L^{\A^1} E ]$, then by  Lemma \ref{zero} we obtain an open subscheme $U$ such that $U$ intersects each irreducible component of $X_{\sigma} $ non-trivially and $f|_{\Sigma^{\infty}_{S^{1}} U_{+} } = 0$. Take the reduced closed subscheme $Z = X \setminus U$. Then dim $Z_{\sigma} < $ dim $X_{\sigma} $. Hence by Gabber presentation lemma we have Nisnevich distinguished square of Lemma \ref{dist} which proves $\pi^{\A^{1}}_{0}(X/U)=0$. Now connectivity follows from \cite[Prop. 4.5]{schmidt2018}
\end{proof}
\bibliographystyle{alphanum}
\bibliography{biblio}

	\vspace{0.8cm}

\begin{center}
	Neeraj Deshmukh, IISER Pune, Dr. Homi Bhabha Road, Pashan, Pune : 411008, INDIA\\
	email: neeraj.deshmukh@students.iiserpune.ac.in\\
	\vspace{0.3cm}

	Amit Hogadi, IISER Pune, Dr. Homi Bhabha Road, Pashan, Pune : 411008, INDIA\\
	email: amit@iiserpune.ac.in\\
	\vspace{0.3cm}
	Girish Kulkarni, IISER Pune, Dr. Homi Bhabha Road, Pashan, Pune : 411008, INDIA\\
	email: girish.kulkarni@students.iiserpune.ac.in\\
		\vspace{0.3cm}
		Suraj Yadav, IISER Pune, Dr. Homi Bhabha Road, Pashan, Pune : 411008, INDIA\\
	email: surajprakash.yadav@students.iiserpune.ac.in\\

\end{center}

\end{document}